\documentclass[11pt,reqno]{amsart}



\usepackage{a4wide}
  \usepackage{amssymb}
  \usepackage{amsmath}
  \usepackage{amsthm}
  \usepackage{amsfonts}
  \usepackage{amscd}
  \usepackage{xspace}
  \usepackage[foot]{amsaddr}
  \usepackage{boxedminipage}
  \usepackage{nomencl}

  \usepackage{booktabs}
  \usepackage{multirow}
  \usepackage[figuresright]{rotating}
  \usepackage[format=hang,font=small]{caption} 
  \usepackage[curve,graph]{xypic}
  \usepackage{lscape}
  \usepackage{subfigure}

  \usepackage{stmaryrd}



  \makeatletter
  \CheckCommand*\refstepcounter[1]{\stepcounter{#1}%
      \protected@edef\@currentlabel
       {\csname p@#1\endcsname\csname the#1\endcsname}%
  }
  \renewcommand*\refstepcounter[1]{\stepcounter{#1}%
    \protected@edef\@currentlabel
      {\csname p@#1\expandafter\endcsname\csname the#1\endcsname}%
  }
  \def\labelformat#1{\expandafter\def\csname p@#1\endcsname##1}
  \DeclareRobustCommand\Ref[1]{\protected@edef\@tempa{\ref{#1}}%
     \expandafter\MakeUppercase\@tempa
  }
  \makeatother

  \makeatletter
  \newcommand{\numberlike}[2]{%
     \expandafter\def\csname c@#1\endcsname{%
         \expandafter\csname c@#2\endcsname}%
  }
  \makeatother

  \def\DefaultNumberTheoremWithin{section}

  \theoremstyle{plain}
  \newtheorem{lem}{Lemma}
     \numberwithin{lem}{\DefaultNumberTheoremWithin}
     \labelformat{lem}{Lemma~#1}
  
     \numberwithin{claim}{\DefaultNumberTheoremWithin}
     \numberlike{claim}{lem}
     \labelformat{claim}{Claim~#1}
  \newtheorem{thm}{Theorem}
     \numberwithin{thm}{\DefaultNumberTheoremWithin}
     \numberlike{thm}{lem}
     \labelformat{thm}{Theorem~#1}
  \newtheorem{cor}{Corollary}
     \numberwithin{cor}{\DefaultNumberTheoremWithin}
     \numberlike{cor}{lem}
     \labelformat{cor}{Corollary~#1}
  \newtheorem{prop}{Proposition}
     \numberwithin{prop}{\DefaultNumberTheoremWithin}
     \numberlike{prop}{lem}
     \labelformat{prop}{Proposition~#1}
  \newtheorem{conj}{Conjecture}
     \numberwithin{conj}{\DefaultNumberTheoremWithin}
     \numberlike{conj}{lem}
     \labelformat{conj}{Conjecture~#1}

  \theoremstyle{definition}
  \newtheorem{defn}{Definition}
     \numberwithin{defn}{\DefaultNumberTheoremWithin}
     \numberlike{defn}{lem}
     \labelformat{defn}{Definition~#1}

  \theoremstyle{definition}
  
     \numberwithin{question}{\DefaultNumberTheoremWithin}
     \numberlike{question}{lem}
     \labelformat{question}{Question~#1}

  \theoremstyle{definition}
  
     \numberwithin{problem}{\DefaultNumberTheoremWithin}
     \numberlike{problem}{lem}
     \labelformat{problem}{Problem~#1}

  \newtheorem{rem}{Remark}
     \numberwithin{rem}{\DefaultNumberTheoremWithin}
     \numberlike{rem}{lem}
     \labelformat{rem}{Remark~#1}

     \numberwithin{example}{\DefaultNumberTheoremWithin}
     \numberlike{example}{lem}
     \labelformat{example}{Example~#1}

     \labelformat{case}{Case~#1}
     \numberwithin{case}{lem}

     \labelformat{step}{Step~#1}
     \numberwithin{step}{lem}

  \labelformat{equation}{(#1)}
  \labelformat{figure}{Figure~#1}
  \labelformat{chapter}{Chapter~#1}
  \labelformat{appendix}{Appendix~#1}
  \labelformat{section}{\S#1}
  \labelformat{subsection}{\S#1}
  \labelformat{subsubsection}{\S#1}

  \def\eqref{\ref}



\def\cocoa{{\hbox{\rm C\kern-.13em o\kern-.07em C\kern-.13em o\kern-.15em A}}}


\def\rank{{\rm rank}}
\def\height{{\rm height}}

\def\deg{{\rm deg}}
\def\pmd{{\mathrm{pmd}}}
\def\iin{{\rm in}}

\def\w{{\mathfrak w}\,}

\def\implies{\ifmmode\Rightarrow \else
        \unskip${}\Rightarrow{}$\ignorespaces\fi}

\def\K{{\mathbb{K}}}
\def\R{{\mathbb{R}}}

\def\RR{\mathbb{R}}

\def\Sym{\mathrm{Sym}}
\def\V{\mathcal{V}}

\textwidth=15cm 
\textheight=22cm 
\topmargin=0.5cm
\oddsidemargin=0.5cm 
\evensidemargin=0.5cm 


\begin{document}
	
\begin{abstract}
Let $\K $ be a field, $[n]=\{1,\ldots,n\} $ and $H=([n],E)$ be a hypergraph.
For an integer $ d \geq 1 $ the Lov\'{a}sz-Saks-Schrijver ideal (LSS-ideal) $ L_H^{\K} (d) 
	\subseteq \K\big[\,y_{ij}~:~(i,j) \in [n] \times [d]\, \big]$ 
is the ideal generated by the polynomials 
$ f^{(d)}_{e}= \sum\limits_{j=1}^{d} \prod\limits_{i \in e} y_{ij}$ for edges $e$ of $ H $.

In this paper for an algebraically closed field $ \K $ and a 
$k$-uniform hypergraph $H=([n],E)$ we employ a connection between LSS-ideals and coordinate sections of the closure of 
	the set $S_{n,k}^d$ of homogeneous degree $k$ symmetric tensors in $n$ variables of $\text{rank} \leq d$ 
to derive results on the irreducibility of its coordinate sections.
To this end we provide results on primality and the complete intersection property of $ L_H^{\K} (d) $. We then use the 
combinatorial concept of positive matching decomposition of a hypergraph  $ H $ to provide bounds on when $L_H^{\K}(d)$ turns
prime to provide results on the irreducibility of coordinate sections of $ S_{n,k}^d $.
	
\end{abstract}	

\address{Shekoofeh Gharakhloo, Department of Mathematics, Institute for Advanced Studies in Basic Sciences (IASBS), Zanjan 45137-66731, Iran}
\email{gh.shekoofeh@iasbs.ac.ir, gh.shekoofeh@gmail.com}

\address{Volkmar Welker, Fachbereich Mathematik und Informatik, Philipps-Universit\"at Marburg, 35032 marburg, Germany}
\email{welker@mathematik.uni-marburg.de}

\thanks{The authors are grateful to Rashid Zaare-Nahandi for numerous suggestions and discussions. The first author acknowledges support from Ministry of Science, Research and Technology of Iran for her research visit to Marburg, where most of the work on the project was done.} 
	
\title{Hypergraph LSS-ideals and coordinate sections of symmetric tensors}
\maketitle	
	
\section{Introduction}\label{section0}
  In this paper we use Lov\'asz-Saks-Schrijver-ideals 
  associated to hypergraphs in order to study coordinate sections of 
  the Zariski closure of the set of symmetric tensors with bounded rank. 

  Let $\K $ be a field, $[n]=\{1,\ldots,n\} $ and $H=([n],E)$ be a 
  hypergraph such that $E$ is a clutter; i.e. a set of sets that are pairwise incomparable with respect to 
  inclusion. For an integer $ d \geq 1 $ and 
  $ e \in E $ we consider the polynomials 
  \begin{align*}
    f^{(d)}_{e}= \sum\limits_{j=1}^{d} \prod\limits_{i \in e} y_{ij} 
  \end{align*}
  in the polynomial ring $ S=\K[y_{ij}:~i\in [n],~ j\in [d]] $. The ideal 
  \begin{align*}
    L^{\K}_{H}(d)=(f^{(d)} _e:~e\in E) \subseteq S 
  \end{align*}  
  is called the Lov\'{a}sz-Saks-Schrijver ideal (LSS-ideal) of $ H $. 
  LSS-ideals for the case when $H$ is a simple graph were first studied 
  in \cite{HMSW} and 
  later in \cite{ac-vw} for graphs and hypergraphs. These papers took their 
  motivation from the fact that when $\K = \R$ the vanishing 
  set of the ideal $L^{\K}_{H}(d) $ 
  coincides with the variety of orthogonal representations of the graph 
  complementary to $ H $. 
  Orthogonal representations of graphs were introduced by Lov\'{a}sz in 1979 
  \cite{Lo}. The variety of orthogonal representations was studied in work of 
  Lov\'asz, Saks and Schrijver \cite{LSS,LSS-correction}. 
  We refer to the book \cite{L} for a comprehensive treatment of orthogonal 
  representations, their varieties and their relations to important 
  combinatorial properties of graphs.

  We now  explain the connection of LSS-ideals of $ k $-uniform hypergraphs 
  and coordinate sections of the variety which is the closure of the set 
  of symmetric tensors of bounded rank.
  Recall that for an integer $ k\geq 1 $, a hypergraph $ H=(V,E) $ is called 
  $ k $-uniform if every edge $ e \in E $ has cardinality $ k $.
 
  Let $\K$ be an algebraically closed field.
  Consider the map 
  $ \phi :(\K^n)^d \longrightarrow \underbrace {\K^n \otimes\ldots \otimes\K^n}\limits_{k}  $ 
  which sends $ (v_1,\ldots,v_d) \in (\K^n)^d $ to the symmetric tensor 
  $$\sum_{j=1}^d \underbrace{v_j\otimes \cdots \otimes v_j}_{k} =  
     \sum_{j=1}^d \sum_{1 \leq i_1, \ldots,i_k \leq n} 
     (v_j)_{i_1} \cdots (v_j)_{i_k}\, e_{i_1}\otimes\ldots\otimes e_{i_k} 
     \in \underbrace {\K^n \otimes\ldots \otimes\K^n}_k $$ 
  where $ \{e_1,\ldots,e_n\} $ is the standard basis of $ \K^n $.
  The Zariski closure of the image of $\phi$ is the variety $S_{n,k}^d$ of 
  symmetric tensors of (symmetric) rank $\leq d$. 
  For background on tensor rank and the geometry of varieties of bounded rank 
  tensors we refer the reader to \cite{Lans}. 
  Let $ \V(L_H^{\K}(d)) $ be the vanishing locus of the ideal $ L_H^{\K}(d) $. 
  Note that for $1 \leq i_i < \cdots < i_k \leq n$ the coefficient of 
  $e_{i_1} \otimes \cdots \otimes e_{i_k}$ in 
  $\phi(v_1,\ldots, v_d)$ is 
  $$\sum_{j=1}^d (v_j)_{i_1} \cdots (v_j)_{i_k} = 
    f^{(d)}_{\{i_1 < \cdots < i_k\}} (v_1,\ldots, v_d).$$
  Thus if we restrict the map $ \varphi $ to $\V(L_H^{\K}(d))$, then we obtain 
  a parameterization of the coordinate section of $S_{n,k}^d$ with $0$ coefficient at  
  $e_{i_1} \otimes \cdots\otimes e_{i_k}$ for $\{i_1,\ldots, i_k\} \in E$. 
  In particular, the Zariski-closure of the image of the restriction is 
  irreducible if $ L_H^{\K}(d) $ is prime. 
  Therefore, proving primality for $ L_H^{\K}(d) $ is a tool for 
  deriving the irreducibility of coordinate sections of $S_{n,k}^d$. 

  In order to approach primality of $ L_H^{\K}(d) $ we prove in 
  Section \ref{sec:2} the following 
  theorems, which hold for general fields $\K$.

  \begin{thm}\label{th1}
    Let $H=([n],E)$ be a $k$-uniform hypergraph and $d \geq 2$. If $L_H^{\K}(d)$ 
    is prime, then $L_H^{\K}(d)$ is a complete intersection.
  \end{thm}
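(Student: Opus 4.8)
The plan is to prove that $\height(L_H^{\K}(d)) = |E|$. Since $L_H^{\K}(d)$ is generated by the $|E|$ polynomials $f^{(d)}_e$, Krull's height theorem already gives $\height(L_H^{\K}(d)) \le |E|$, so everything reduces to the reverse inequality. First I would record that the $f^{(d)}_e$ have pairwise disjoint monomial supports: a monomial $\prod_{i\in e} y_{ij}$ determines both its column $j$ and, since $H$ is $k$-uniform, the edge $e$. Being linearly independent forms of the same degree $k$, the $f^{(d)}_e$ span the degree-$k$ graded piece of $L_H^{\K}(d)$ and hence constitute a minimal generating set with $|E|$ elements. Thus the goal $\height = |E|$ is exactly the assertion that they form a regular sequence, i.e. that $L_H^{\K}(d)$ is a complete intersection.

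To bound the height from below I would combine the Jacobian criterion with equidimensionality. Passing to the algebraic closure does not change the height, because the dimension of a finitely generated algebra is invariant under field extension; thus I may replace $S$ by $\bar S = \bar\K[y_{ij}]$. Here primality enters decisively: since $S/L_H^{\K}(d)$ is a domain, the base change $\bar S/L_H^{\K}(d)\bar S \cong (S/L_H^{\K}(d))\otimes_{\K}\bar\K$ is equidimensional, so every irreducible component of $\V(L_H^{\K}(d))\subseteq \bar\K^{nd}$ has codimension exactly $h:=\height(L_H^{\K}(d))$. Consequently, at every point $p$ of the variety the Zariski tangent space is $\ker J(p)$, where $J(p)=\big(\tfrac{\partial f^{(d)}_e}{\partial y_{kl}}(p)\big)$, so $nd-\rk J(p)=\dim T_pV \ge \dim_p V = nd-h$, giving $\rk J(p)\le h$ everywhere. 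Hence it suffices to exhibit a single point $p^{\ast}\in\V(L_H^{\K}(d))$ at which $\rk J(p^{\ast}) = |E|$: this forces $|E|\le h\le|E|$ and completes the proof.

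The construction of such a point $p^{\ast}$ is the heart of the argument and the step I expect to be the main obstacle. A direct computation gives $\partial f^{(d)}_e/\partial y_{kl} = \prod_{i\in e\setminus\{k\}} y_{il}$ when $k\in e$ and $0$ otherwise, so the row of $J$ indexed by $e$ is supported on the columns $(k,l)$ with $k\in e$. I would exploit the hypothesis $d\ge 2$ to gain the necessary freedom: the idea is to select for each edge a monomial partial derivative to serve as a pivot and then to fix the coordinates $y_{ij}$ so that simultaneously (i) all defining equations $f^{(d)}_e$ vanish and (ii) the $|E|$ gradient vectors $\nabla f^{(d)}_e(p^{\ast})$ are linearly independent. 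The difficulty is that a vertex $i$ lying in several edges forces its coordinates $y_{i1},\dots,y_{id}$ into all the corresponding rows at once, so the pivots cannot be chosen independently edge by edge; reconciling the vanishing conditions (i) with the independence (ii) is precisely the combinatorial core, and it is here that the extra columns afforded by $d\ge 2$ are essential (for $d=1$ the $f^{(d)}_e$ are single monomials and the statement genuinely fails). In particular this construction is what carries the implicit bound $|E|\le nd$. Once a suitable $p^{\ast}$ is produced, the rank estimate of the previous paragraph closes the proof at once.
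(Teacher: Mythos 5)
Your framework (Krull for $\height \le |E|$, equidimensionality of $(S/L_H^{\K}(d))\otimes_\K\bar\K$ from primality, and the tangent-space estimate $\rk J(p)\le h$ at every point) is set up correctly, but the proof has a genuine gap exactly where you say it does: the existence of a point $p^{\ast}\in\V(L_H^{\K}(d))$ with $\rk J(p^{\ast})=|E|$ is never established, and this is not a routine verification that can be deferred --- it \emph{is} the theorem. Note that such a point can only exist when the conclusion already holds (a rank-$|E|$ point forces some component of codimension $|E|$), so any construction of $p^{\ast}$ must invoke the primality hypothesis in an essential combinatorial way; your sketch uses primality only for equidimensionality and offers no mechanism by which primality constrains $H$ (for instance, for the complete graph $K_6$ with $d=2$ one has $|E|=15>12=nd$, so no such point exists --- consistent with the theorem only because $L$ is then not prime, but illustrating that the pivot-selection problem you describe is not a matter of ``freedom'' from $d\ge 2$ alone). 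There is also a field-theoretic obstruction: the paper's theorem is asserted for arbitrary fields, while the implication ``prime complete intersection $\Rightarrow$ a point where the Jacobian of the given generators has full rank'' is false in positive characteristic (e.g.\ $x^p-t$ over $\mathbb{F}_p(t)$ is prime and a complete intersection with identically vanishing Jacobian), so even granting a construction of $p^{\ast}$ in characteristic $0$, the method would need a separate argument in characteristic $p$.

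For comparison, the paper proves the statement by induction on $n$ using the symmetric-algebra machinery of Avramov and Huneke: $S/L_H^{\K}(d)$ is the symmetric algebra of the cokernel of the map given by the $u\times d$ matrix $A$ of monomials attached to the link $U$ of the vertex $n$. Primality enters through a containment obstruction (the hypergraphs $H_{W,c}$): if $L_H^{\K}(d)$ is prime then $u<d$, which together with the nonvanishing of the minors of $A$ modulo $L_{H'}^{\K}(d)$ and the domain criterion for symmetric algebras yields the height inequalities $\height(I_t(A))\ge u-t+2$, and these in turn imply the complete intersection criterion $\height(I_t(A))\ge u-t+1$. This route quantifies how primality restricts the local combinatorics of $H$ relative to $d$ --- precisely the information your approach would need to build $p^{\ast}$ --- and it works uniformly over any field because it never appeals to smoothness or to derivatives.
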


  \begin{thm}\label{th2}
    Let $H=([n],E)$ be a $k$-uniform hypergraph and $d \geq 2$. If 
    $L_H^{\K}(d-1)$ is a complete intersection, then 
    $L_H^{\K}(d)$ is prime.
  \end{thm}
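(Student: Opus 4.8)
The plan is to show that $A := S/L_H^{\K}(d)$ is a connected normal ring, hence a domain, which is exactly the assertion that $L_H^{\K}(d)$ is prime. Write $R = \K[y_{ij} : i \in [n],\, 1 \le j \le d-1]$, so that $S = R[y_{1,d}, \ldots, y_{n,d}]$, and observe that $f^{(d)}_e = f^{(d-1)}_e + m_e$ where $m_e = \prod_{i \in e} y_{i,d}$ is a squarefree degree-$k$ monomial in the fresh variables $y_{1,d}, \ldots, y_{n,d}$. I would run the argument through Serre's criterion: a Cohen--Macaulay ring that is regular in codimension $1$ is normal, and a connected normal ring is a domain.

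First I would upgrade the hypothesis to the complete intersection property at level $d$. Assign the fresh variables the weight $w(y_{i,d}) = -1$ and all remaining variables weight $0$. Since $m_e$ then has weight $-k < 0$ while $f^{(d-1)}_e$ has weight $0$, the initial form of $f^{(d)}_e$ with respect to $w$ is precisely $f^{(d-1)}_e$, now regarded inside $S$. By hypothesis these level-$(d-1)$ generators form a regular sequence in $R$, and a regular sequence stays regular after the free extension $R \subseteq S$; as a family of elements whose initial forms form a regular sequence is itself a regular sequence (and the associated weight degeneration is flat), the $f^{(d)}_e$ form a regular sequence. Hence $L_H^{\K}(d)$ is a complete intersection and $A$ is Cohen--Macaulay, in particular it satisfies Serre's condition $S_2$. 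Connectedness is immediate: each $f^{(d)}_e$ is homogeneous of degree $k$ in the standard grading, so $A$ is a positively graded $\K$-algebra with $A_0 = \K$, whence $A$ has no nontrivial idempotents and $\V(L_H^{\K}(d))$ is connected.

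It remains to prove that $A$ is regular in codimension $1$, and this is the \emph{heart} of the argument. I would apply the Jacobian criterion. The partial derivatives are $\partial f^{(d)}_e / \partial y_{lj} = \prod_{i \in e \setminus \{l\}} y_{ij}$ when $l \in e$ and $0$ otherwise, so the rows of the Jacobian are indexed by the edges and split into $d$ blocks according to the column index $j$. The goal is to show that the locus in $\V(L_H^{\K}(d))$ on which this $|E| \times nd$ matrix drops below rank $|E|$ has codimension at least $2$; since the non-regular locus is contained in the non-smooth locus, this yields $R_1$, and together with $S_2$ it gives normality. The main obstacle is exactly this codimension estimate: one must show the singular locus is small, and this is where both the assumption that the level-$(d-1)$ ideal is a complete intersection and the combinatorial structure of $H$ must enter, rather than a mere dimension count. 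The extra room is provided by $d \ge 2$: having at least two independent blocks of columns lets one keep $|E|$ rows independent away from a codimension-$\ge 2$ locus, the vanishing of the derivatives in one block being compensated generically by those in another. I expect the careful bookkeeping of which maximal minors survive on each stratum of $\V(L_H^{\K}(d))$ to be the most delicate point.

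Finally, combining the three ingredients, Serre's criterion shows that $A$ is normal ($R_1 + S_2$); being also connected, $A$ is a normal domain, and therefore $L_H^{\K}(d)$ is prime, as claimed.
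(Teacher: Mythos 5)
Your first two steps are sound: the weight degeneration showing that if $L_H^{\K}(d-1)$ is a complete intersection then so is $L_H^{\K}(d)$ is legitimate (it is essentially the same degeneration the paper uses inside its own proof), and the connectedness observation for a positively graded algebra with degree-zero part $\K$ is standard. The fatal problem is the $R_1$ step, which you yourself call the heart: what you offer there is a description of the goal (``show the locus where the Jacobian drops rank has codimension at least $2$'') plus the hope that ``careful bookkeeping'' will finish it. This is not a proof, and, more seriously, it cannot be completed from the ingredients you have actually established. After your first step the hypotheses in play are: $S/L_H^{\K}(d)$ is a connected, graded complete intersection, hence Cohen--Macaulay and $S_2$. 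These properties do \emph{not} imply primality, so no argument using only them can close the gap. Concretely, take $\K=\mathbb{C}$, $k=2$, $H=K_3$ the triangle and $d=2$: the ideal $L_{K_3}^{\K}(2)=(y_{11}y_{21}+y_{12}y_{22},\ y_{11}y_{31}+y_{12}y_{32},\ y_{21}y_{31}+y_{22}y_{32})$ has height $3$, so it is a complete intersection, and its zero set is connected since every component passes through the origin; yet, writing $v_i=(y_{i1},y_{i2})$, the variety of pairwise orthogonal triples in $\mathbb{C}^2$ breaks into five $3$-dimensional irreducible components (three of the form ``$v_i=0$ and the other two orthogonal'', plus two components where all three vectors lie on a common isotropic line), so the ideal is not prime and in particular $R_1$ must fail. (Consistently, the theorem's hypothesis fails here: $L_{K_3}^{\K}(1)$ is not a complete intersection.)

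So the singular-locus codimension estimate is exactly where the level-$(d-1)$ hypothesis has to do work \emph{beyond} implying the complete intersection property at level $d$, and your proposal contains no mechanism for it to enter; that is a genuine gap, not bookkeeping. The paper supplies such a mechanism by a different route: induction on $n$, realizing $S/L_H^{\K}(d)$ as a symmetric algebra of a module over $R=S'/L_{H'}^{\K}(d)$ as in \ref{rem:symmetric}, invoking the Avramov--Huneke criterion \ref{Sym}(2) to convert primality into the height bounds $\height(I_t(A))\geq u-t+2$ for ideals of minors of the presentation matrix, and then combining the weight degeneration with \ref{lem:nonvan} and \ref{height of minors} to deduce those bounds from \ref{Sym}(1) applied to the complete intersection $L_H^{\K}(d-1)$. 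Any salvage of your Jacobian/Serre plan would need an analogue of that last lemma: a concrete statement letting the complete intersection property one level down control the singular locus one level up.
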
 

  These theorems generalize results in the graph case from \cite{ac-vw}. 
  In order to understand when the ideal $ L_H^{\K}(d) $ is complete 
  intersection, we employ the following combinatorial tool from 
  \cite[Definition 5.1 and 5.3]{ac-vw}.
  Let $H = (V,E)$ be a hypergraph and recall that a subset $M \subseteq E$ is
  a \textit{matching} if $e \cap e' = \varnothing$ for $e,e' \in M$ and $e \neq e'$.

  \begin{defn}
    \begin{itemize}
      \item[(i)]
        A \textit{positive matching} of $H$ is a matching $M$ of $H$ such that 
        there exists a weight function $\w \colon V \to \mathbb{R}$ satisfying:
        \begin{equation}
          \begin{split}
		  \w(e) := & \sum\limits_{i \in e} \w(i) >0, \quad\text{ if } e \in M, \\
		  \w(e) := & \sum\limits_{i \in e} \w(i) <0, \quad\text{ if } e \notin M.
          \end{split}
        \end{equation}
      \item[(ii)]
        A \textit{positive matching decomposition} (or \textit{pmd}) of $H$ 
        is a partition $E = \bigcup_{i=1}^p E_i$ of $E$ such that $E_i$ is a 
	positive matching of $(V, E \setminus \bigcup_{j=1}^{i-1} E_j)$, 
        for $i = 1, \ldots, p$. The $E_i$ are called the \textit{parts} of the 
        pmd. The smallest $p$ for which $ H $ admits a pmd with $ p $ parts 
	will be denoted by $ \pmd(H) $.
    \end{itemize}
  \end{defn}

  In \cite[Proposition 2.4, Lemma 5.5]{ac-vw} it is proved that for a 
  hypergraph $H = (V,E)$:

  \begin{align}
    \label{eq:imp}
    \pmd(H) \leq d & \Rightarrow L^{\K}_{H}(d)  \text{ is a complete intersection}.
  \end{align}

  Thus for proving irreducibility of the coordinate section of $S_{n,k}^d$ 
  corresponding to $H$ we can proceed as follows. First we show that 
  $d-1 \geq \pmd(H)$. 
  Then by \eqref{eq:imp} the ideal $L^{\K}_{H}(d-1)$ is a complete 
  intersection and by \ref{th2} it follows that $L^{\K}_{H}(d) $ is prime. 
  As a consequence, the coordinate section of $S_{n,k}^d$ corresponding to $H$ 
  is irreducible. 

  Our next goal in Section \ref{sec:3} is to deduce bounds for \textit{pmd} in 
  order to apply the above mentioned method. We compute $\pmd(H)$  
  for various families of hypergraphs.
  For example we prove the following result about hypergraphs which are trees.
  Note that our definition of a $k$-uniform tree is more 
  restrictive than other definitions from the literature
  (see \cite{b}, \cite{bdcv}, \cite{np}).
  For a hypergraph $H$ we denote by $\Delta(H)$ the maximal degree of a 
  vertex.

  \begin{thm}\label{TREE}
    Let $ H=(V,E) $ be a $ k $-uniform tree. Then 
    $\pmd(H)=\Delta(H)$.
  \end{thm}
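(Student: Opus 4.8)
The plan is to prove the two inequalities $\pmd(H)\ge \Delta(H)$ and $\pmd(H)\le \Delta(H)$ separately, the decisive point being that for a tree the positivity requirement in the definition of a positive matching is \emph{automatically} fulfillable, so that $\pmd(H)$ collapses to an ordinary edge-colouring invariant. For the lower bound I would fix a vertex $v$ with $\deg(v)=\Delta(H)$. The $\Delta(H)$ edges through $v$ pairwise intersect in $v$, so no matching of $H$, nor of any sub-hypergraph, can contain two of them. In a pmd $E=\bigcup_{i=1}^{p}E_i$ each part $E_i$ is in particular a matching, hence contains at most one edge through $v$; since all $\Delta(H)$ edges through $v$ must appear among the parts, $p\ge \Delta(H)$. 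This argument needs nothing about trees and yields $\pmd(H)\ge\Delta(H)$ for any hypergraph.

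The heart of the proof is the claim that for a tree \emph{every} matching is a positive matching. I would write the edges as $e_1,\dots,e_m$ in an order witnessing the tree structure, so that for $i\ge 2$ the edge $e_i$ meets $e_1\cup\cdots\cup e_{i-1}$ in a single vertex and thus contains $k-1\ge 1$ vertices lying in no earlier edge (assuming $k\ge 2$; the case $k=1$ is trivial). Consequently the incidence vectors $\chi_{e_1},\dots,\chi_{e_m}\in\R^{V}$ are linearly independent, since each $\chi_{e_i}$ has a nonzero entry in a coordinate where all earlier $\chi_{e_j}$ vanish. Now let $H'=(V,E')$ be any sub-hypergraph of $H$ and $M\subseteq E'$ a matching. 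By Gordan's theorem, feasibility of the strict system $\w(e)>0$ for $e\in M$ and $\w(e)<0$ for $e\in E'\setminus M$ is equivalent to the non-existence of a nonzero family of nonnegative reals $(\lambda_e)_{e\in M}$, $(\mu_e)_{e\in E'\setminus M}$ with $\sum_{e\in M}\lambda_e\,\chi_e=\sum_{e\in E'\setminus M}\mu_e\,\chi_e$. Linear independence of the $\chi_e$ forces all coefficients to vanish, so no such family exists and the required weight function $\w$ can always be found. Hence a pmd of $H$ is precisely a partition of $E$ into matchings, and $\pmd(H)$ equals the chromatic number $\chi(L(H))$ of the intersection graph $L(H)$ whose vertices are the edges of $H$, two being adjacent when they meet.

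It then remains to show $\chi(L(H))\le\Delta(H)$, which I would obtain by colouring the edges greedily in the order $e_1,\dots,e_m$ using colours $\{1,\dots,\Delta(H)\}$. When $e_i$ is reached, every already-coloured edge meeting $e_i$ contains the unique attaching vertex $v_i=e_i\cap(e_1\cup\cdots\cup e_{i-1})$, so there are at most $\deg(v_i)-1\le\Delta(H)-1$ of them and at least one colour is free for $e_i$. This produces a proper colouring of $L(H)$ with $\Delta(H)$ colours, giving $\pmd(H)=\chi(L(H))\le\Delta(H)$; combined with the lower bound this yields $\pmd(H)=\Delta(H)$. The main obstacle, and the only place where the tree hypothesis is genuinely used, is the linear-independence observation of the second paragraph: once positivity is seen to be free, the argument reduces to a routine greedy edge-colouring, and indeed the presence of a cycle (where the incidence vectors become dependent) is exactly what can destroy positivity.
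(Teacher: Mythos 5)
Your route is genuinely different from the paper's and, in outline, attractive. The paper proves $\pmd(H)\le\Delta(H)$ by induction on $\Delta(H)$: its \ref{tree} constructs, by a layered process with an explicit $\pm1$ weight function, a single positive matching whose vertex set contains every vertex of degree at least two, removes it, and recurses. You instead try to show that for trees positivity is automatic --- every matching of every sub-hypergraph is positive, via linear independence of the edge incidence vectors plus Gordan's theorem --- so that $\pmd(H)$ collapses to the chromatic number of the intersection graph of $E$, which a greedy colouring along your edge ordering bounds by $\Delta(H)$. Your lower bound, the Gordan step, and the greedy step (granted the ordering) are all correct, and the observation ``linear independence $\Rightarrow$ every matching is positive'' is a genuinely nice reduction that the paper does not make.

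The genuine gap is the assertion, made without proof, that the edges of a tree can be listed $e_1,\dots,e_m$ so that each $e_i$ ($i\ge2$) meets $e_1\cup\dots\cup e_{i-1}$ in exactly one vertex. You call this ``an order witnessing the tree structure,'' treating it as definitional; but the paper defines a $k$-uniform tree by the axioms (T1) and (T2) --- pairwise intersections of size at most one, and existence/uniqueness of connecting edge sequences --- not by buildability, and your argument never invokes (T1) or (T2) anywhere else. This is not a formality. Read literally, (T2) is vacuous for any hypergraph in which every two vertices lie in a common edge, so $K_3$ (for $k=2$) and the Fano plane (for $k=3$) satisfy the stated axioms; for these no such ordering exists, and the conclusion itself fails, since every two edges intersect and hence $\pmd(K_3)\ge 3>2=\Delta(K_3)$ and $\pmd(\mathrm{Fano})\ge 7>3=\Delta(\mathrm{Fano})$. (In these examples the incidence vectors happen to remain linearly independent, so it is your greedy-colouring step, not positivity, that breaks.) Consequently the ordering is strictly stronger than what you are given: any complete proof must engage (T1)/(T2) under the reading the paper intends --- which excludes such degenerate cases --- and derive buildability from path-uniqueness, e.g.\ by showing that a new edge meeting the already-built connected part in two vertices would force two distinct (T2)-sequences between some pair of vertices lying in no common edge. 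That derivation is exactly the combinatorial core that the paper's \ref{degree1 vertex} and \ref{tree} supply in its approach, and it is the piece missing from yours; both your linear-independence claim and your greedy bound hang on it.
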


  To the best of our knowledge the strongest general bound known for the symmetric rank 
  of tensors of order $ k $ on $ \K^n $ is $ \binom{n+k-1}{k}-n+1$ 
  (see \cite[Corollary 5.2]{LANS}). Therefore, in order to make a sensible 
  contribution we need to take $ d $ below this bound.

  \begin{cor}
    Let $ H=([n],E) $ be a $k$-uniform tree. Then the coordinate sections of 
    the variety $S_{n,k}^d$ with respect to $ H $ for 
    $ \Delta(H)+1 \leq d \leq \binom{n+k-1}{k}-n $, are irreducible.
  \end{cor}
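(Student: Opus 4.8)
The plan is to chain together the three tools developed above: the implication from \eqref{eq:imp} that $\pmd(H) \le d'$ forces $L_H^{\K}(d')$ to be a complete intersection, the primality criterion of \ref{th2}, and the computation $\pmd(H)=\Delta(H)$ for trees from \ref{TREE}. Throughout I take $\K$ algebraically closed, as is required for $S_{n,k}^d$ to be defined.

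Fix $d$ with $\Delta(H)+1 \le d \le \binom{n+k-1}{k}-n$. Since a nonempty $k$-uniform tree has at least one edge, we have $\Delta(H) \ge 1$, and therefore $d \ge \Delta(H)+1 \ge 2$; this is exactly the hypothesis of \ref{th2}. First I would apply \ref{TREE} to obtain $\pmd(H)=\Delta(H)$. Together with the lower bound on $d$ this gives $\pmd(H)=\Delta(H) \le d-1$, so \eqref{eq:imp} (applied with $d-1$ in place of $d$) shows that $L_H^{\K}(d-1)$ is a complete intersection. Feeding this into \ref{th2}, whose hypothesis $d \ge 2$ we have already verified, I conclude that $L_H^{\K}(d)$ is prime.

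It then remains to translate primality into irreducibility, which is precisely the content of the discussion in the introduction: restricting the parameterization $\phi$ to the variety $\V(L_H^{\K}(d))$ yields, up to Zariski closure, the coordinate section of $S_{n,k}^d$ with vanishing coefficient at $e_{i_1}\otimes\cdots\otimes e_{i_k}$ for every $\{i_1,\ldots,i_k\}\in E$. Since $L_H^{\K}(d)$ is prime its vanishing locus $\V(L_H^{\K}(d))$ is irreducible, and the Zariski closure of the image of an irreducible variety under a morphism is again irreducible. Hence this coordinate section is irreducible, as claimed.

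I do not expect a genuine obstacle inside the proof of the corollary itself, since it is a formal consequence of the three cited statements; the one point worth spelling out is the role of the upper bound $d \le \binom{n+k-1}{k}-n$. This bound plays no part in the irreducibility argument: it merely keeps $d$ strictly below the general symmetric-rank bound $\binom{n+k-1}{k}-n+1$ of \cite{LANS}, so that $S_{n,k}^d$ is in general a proper subvariety of the space of symmetric tensors and the conclusion carries content. Indeed, for $d \ge \binom{n+k-1}{k}-n+1$ every symmetric tensor of order $k$ has rank $\le d$, so $S_{n,k}^d$ is the full symmetric tensor space and its coordinate sections are linear subspaces, hence irreducible for trivial reasons.
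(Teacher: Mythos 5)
Your proposal is correct and follows exactly the chain the paper itself uses: \ref{TREE} gives $\pmd(H)=\Delta(H)\le d-1$, then \eqref{eq:imp} makes $L_H^{\K}(d-1)$ a complete intersection, \ref{th2} makes $L_H^{\K}(d)$ prime, and the parameterization argument from the introduction converts primality into irreducibility of the coordinate section. Your added remarks (verifying $d\ge 2$ for \ref{th2}, and explaining that the upper bound on $d$ only serves to keep the statement non-trivial relative to the rank bound of \cite{LANS}) are both accurate and consistent with the paper's own discussion.
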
 

  At the end of Section \ref{sec:3}, we state a conjecture which says that
  when $ H $ is a $3$-uniform hypergraph then $\pmd(H)$ is bounded from 
  above by a 
  quadratic function in the number of vertices. Note, that the bound from 
  \cite{LANS} on the (symmetric) tensor rank is cubical in the number of
  vertices.
 
\section{Algebraic properties of LSS--ideals for hypergraphs}
\label{sec:2}

  In this section we prove \ref{th1} and \ref{th2}. 
  The proof uses an extension and modifications of the approach 
  from \cite{ac-vw} in the graph case and some new proof methods. 
  Indeed, some of the facts used in \cite{ac-vw} do not generalize to 
  hypergraphs and have to replaced by alternative arguments.
  As in \cite{ac-vw} we heavily rely on facts about symmetric algebras from 
  \cite{lla} and \cite{CH}.

  Let $R$ be a ring and $M$ be an $R$-module given as the cokernel of an 
  $R$-linear map $f : R^m \xrightarrow{A^T} R^n$ represented by a matrix 
  $A \in R^{m \times n}$. 
  In this situation the symmetric algebra 
  $\Sym_R(M)$ of $M$ is isomorphic to  the quotient of $\Sym_R(R^n) 
  = R[x_1,\ldots, x_n]$ by the ideal generated by the entries of the vector
  $A^T (x_1,\ldots, x_m)^T$. 
  For a matrix $B \in R^{m \times n}$ and a number $t \geq 1$ we denote by
  $I_t(B)$ the ideal generated by the $(t \times t)$-minors of $B$. 

  The key property of symmetric algebras we use in this section is stated in 
  the following proposition from \cite{ac-vw} where the relevant results from
  \cite{lla} and \cite{CH} are summarized.

  \begin{prop}[{\cite[Theorem 4.1]{ac-vw}}]
    \label{Sym}
    Let $R$ be a complete intersection and $M$ be an $R$-module 
    presented by the matrix $A \in R^{m \times n}$. Then 
    \begin{itemize}
      \item[(1)] $\Sym_R(M) $ is a complete intersection $\Leftrightarrow$ 
	      $\height(I_t(A)) \geq  m - t + 1 $ for all $ t=1,\ldots,m $.
      \item[(2)] $\Sym_R(M) $ is a domain and $ I_m(A)\neq 0 $ 
	      $\Leftrightarrow$  $ R $ is a domain and 
              $\height (I_t(A)) \geq  m - t + 2 $ for all $ t=1,\ldots,m $. 
    \end{itemize}
  \end{prop}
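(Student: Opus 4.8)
The plan is to trade the two statements about $\Sym_R(M)$ for height computations of the ideal of linear forms determined by $A$. Writing the presentation as $R^m \xrightarrow{A^T} R^n \to M \to 0$, one identifies $\Sym_R(M)$ with $S/I$, where $S = R[x_1,\dots,x_n] = \Sym_R(R^n)$ and $I = (g_1,\dots,g_m)$ is generated by the entries of $A\,(x_1,\dots,x_n)^T$. Since $R$ is a complete intersection, the polynomial extension $S$ is again a complete intersection, hence Cohen--Macaulay of dimension $\dim R + n$; this is the ambient ring in which everything happens.

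For part (1) I would use that, because $S$ is Cohen--Macaulay, the $m$ linear generators $g_1,\dots,g_m$ form an $S$-regular sequence exactly when $\height_S I = m$, and that this is precisely the condition for $S/I = \Sym_R(M)$ to be a complete intersection; equivalently, $\dim \Sym_R(M) = \dim R + n - m$. The bridge to the Fitting ideals is the Huneke--Rossi dimension formula
\[
  \dim \Sym_R(M) = \max_{\mathfrak p \in \mathrm{Spec}\, R}\bigl(\dim R/\mathfrak p + n - \rk A(\mathfrak p)\bigr),
\]
together with the elementary equivalence $\rk A(\mathfrak p) < t \Leftrightarrow I_t(A) \subseteq \mathfrak p$. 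Since $\dim \Sym_R(M) \ge \dim R + n - m$ always holds, the complete intersection property is the reverse inequality, which by the formula and the catenary, equidimensional behaviour of the Cohen--Macaulay ring $R$ (so $\height \mathfrak p = \dim R - \dim R/\mathfrak p$) reads $\height \mathfrak p \ge m - \rk A(\mathfrak p)$ for every prime $\mathfrak p$. Testing this on the minimal primes of each $I_t(A)$ gives exactly $\height I_t(A) \ge m - t + 1$, and conversely these inequalities bound the displayed maximum by $\dim R + n - m$; reading the identity in both directions yields both implications of (1).

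For part (2) the sharper bound $\height I_t(A) \ge m - t + 2$ is what promotes the complete intersection to a domain. Here I would invoke the theory of approximation complexes: together with the good depth of the complete intersection $R$, the stronger Fitting conditions make the associated $\mathcal Z$-complex acyclic, so $M$ is of linear type, $\Sym_R(M)$ coincides with the Rees algebra of $M$, and the latter is a domain whenever $R$ is. For the reverse implication, the inclusion $R \hookrightarrow \Sym_R(M)$ forces $R$ to be a domain, and a localization-and-rank-stratification argument shows that the failure of any bound $\height I_t(A) \ge m - t + 2$ would produce either torsion in $M$ or an extra minimal prime of $I$ along the locus where the rank of $A$ drops below $t$, contradicting irreducibility; the hypothesis $I_m(A) \neq 0$ ensures that $M$ has a well-defined generic rank $n - m$, so that this stratification is meaningful.

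The main obstacle is part (2). Part (1) is essentially formal once the Huneke--Rossi formula is available, but passing from the height bounds $m - t + 2$ to primality --- controlling the associated primes of $\Sym_R(M)$, proving the linear-type property, and ruling out the degenerate components --- is the substantive step, and it is here that the cited acyclicity and primality criteria from \cite{lla} and \cite{CH} carry the weight.
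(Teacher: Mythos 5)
First, a remark on the comparison itself: the paper contains \emph{no} proof of Proposition \ref{Sym}. It is quoted verbatim from \cite[Theorem 4.1]{ac-vw}, which in turn only records theorems of Avramov \cite{lla} (part (1)) and Huneke \cite{CH} (part (2)). So your attempt has to be measured against those original proofs, which are respectively homological (deviations/Koszul homology of the linear forms) and inductive via generic elements --- not against anything in this paper.

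Your part (1) has a genuine gap at its pivot. You claim that $\Sym_R(M)$ is a complete intersection \emph{exactly} when $\height_S I = m$; your Cohen--Macaulay regular-sequence argument justifies only the ``if'' half, and the ``only if'' half is precisely the content of Avramov's theorem, not a formality. Two concrete obstructions. First, taken literally the claim is false: for $R=k$ a field and $A$ the zero $m\times n$ matrix, $\Sym_R(M)=k[x_1,\ldots,x_n]$ is a complete intersection, while $I=(0)$ has height $0$ and every $I_t(A)=(0)$, so all the bounds $\height I_t(A)\geq m-t+1$ fail; thus the statement carries an implicit nondegeneracy hypothesis (visible as ``$I_m(A)\neq 0$'' in part (2), and secured in the paper's applications by \ref{lem:nonvan}), which any proof of this direction must actually use. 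Second, the natural local argument --- ``$(S/I)_P$ is a complete intersection, hence $I_P$ is generated by a regular sequence, hence $\height I_P=\mu(I_P)$'' --- is invalid here, because that inference requires the ambient local ring to be \emph{regular}, whereas $S_P$ is only a complete intersection; over a non-regular ambient CI the defining ideal of a CI quotient need not be generated by a regular sequence (e.g.\ $k=Q/(u)$ with $Q=k[[u]]/(u^2)$). Repairing the ``only if'' direction needs real input: either Avramov's homological machinery, or at least an argument combining gradedness of the minimal primes of $I$, the height-$m$ minimal prime produced by generic full rank of $A$, and local equidimensionality of Cohen--Macaulay rings. None of this appears in your outline, so ``essentially formal once the Huneke--Rossi formula is available'' is an overstatement; the backward implication of (1), by contrast, is correctly sketched. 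For part (2) you do not really give a proof at all: the hard half (height bounds $\Rightarrow$ domain, Huneke's theorem) is delegated to unverified acyclicity of the approximation $\mathcal{Z}$-complex plus the unproved step that linear type over a domain yields a domain (which itself needs $I_m(A)\neq 0$ so that $M$ has a rank and $\Sym_R(M)$ modulo torsion embeds into a polynomial ring over $\mathrm{Frac}(R)$), and the converse is a gesture at ``rank stratification.'' In effect your treatment of (2) re-cites \cite{lla} and \cite{CH}, which is exactly what the paper does --- but that is a citation, not a proof.
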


  We fix some notation that we will use throughout this section without further
  reference. For a $k$-uniform hypergraph $H = ([n],E)$ and a number 
  $d \geq 1$ we write 

  \bigskip

  {\small
  \begin{tabular}{cc} $S= \K\big[y_{ij} : i \in [n], j\in[d]\big]$, &  
     $S'=\K\big[y_{ij}: i \in [n-1], j\in[d]\big]$, \\ 
	  & \\
     $H'=H\setminus \{n\}$, & $R = S'/L_{H'}^\K(d)$, \\
	  & \\
	  $U=\Big\{\{i_1,\ldots,i_{k-1}\} \subseteq [n-1]\, 
	  \Big|\, \{i_1,\ldots,i_{k-1},n\} \in E\Big\}$, &   
     $u=|U|$. 
  \end{tabular}
  }

  \bigskip

  First we need to understand that for a $k$-uniform hypergraph $H$ the ideal  
  $L_H^{\K}(d) \subseteq S$ can be seen as the defining ideal of a symmetric 
  algebra.

  \begin{rem}
    \label{rem:symmetric}
    Let $H=([n],E)$ be a $k$-uniform hypergraph. 
    Then $ S/L_H^{\K}(d)$ is the 
    symmetric algebra of the cokernel of the linear map 
    $R^u \xrightarrow{A^T} R^d$ defined by the $u\times d$ matrix 
    $A$ where $$A=\Big(y_{i_1 j} y_{i_2 j} \cdots 
	 y_{i_{k-1}j}\Big)_{\{i_1,\ldots, i_{k-1} \} 
	 \in U, j \in [d]} \in S'^{\,u \times d}.$$
  \end{rem}

  In order to apply \ref{Sym} to the situation described in 
  \ref{rem:symmetric} we need to prove the two following lemmas.

  \begin{lem}
    \label{lem:nonvan}
    Let $H = ([n],E)$ be a $k$-uniform hypergraph.
    Then for every $2 \leq t \leq u$ the $t$-minor $f_t$ of 
    $$A=(y_{i_1 j} y_{i_2 j} \cdots y_{i_{k-1}j})_{\{i_1,\ldots, i_{k-1} \} 
	  \in U, j \in [d]} \in S^{\,u \times d}$$
    corresponding to the first $t$ rows and columns is non-zero in 
    $S/L_H^{\K}(d)$.
  \end{lem}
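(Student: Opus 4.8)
The plan is to prove that $f_t \notin L_H^{\K}(d)$ (equivalently, that $f_t$ is nonzero in $S/L_H^{\K}(d)$) by exhibiting a single point at which every generator of $L_H^{\K}(d)$ vanishes while $f_t$ does not. Concretely, I would construct a $\K$-algebra homomorphism $\varphi \colon S \to \K$ (evaluation at a $0/1$-point) with $\varphi(f^{(d)}_e) = 0$ for every $e \in E$ and $\varphi(f_t) \neq 0$. Once such a $\varphi$ exists, any identity $f_t = \sum_{e \in E} g_e f^{(d)}_e$ would give $\varphi(f_t) = \sum_e \varphi(g_e)\varphi(f^{(d)}_e) = 0$, a contradiction. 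A pleasant feature of this route is that it needs no Gr\"obner basis and no Nullstellensatz, and works verbatim over an arbitrary field, since the point will have coordinates in the prime field.

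Write $S_1, \dots, S_t \in U$ for the $(k-1)$-subsets indexing the first $t$ rows of $A$. The construction I would use is to let column $j$ \emph{switch on} exactly the vertices of $S_j$: define
\[
  \varphi(y_{ij}) = \begin{cases} 1, & j \leq t \text{ and } i \in S_j,\\ 0, & \text{otherwise.}\end{cases}
\]
The guiding idea is that in any fixed column $j \le t$ only the $k-1$ vertices of $S_j$ are switched on, so a product over a $k$-element edge can never be fully supported in a single column, whereas a product over the $(k-1)$-element row index $S_a$ can.

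For $\varphi(f^{(d)}_e) = 0$, note that $\varphi\!\left(\prod_{i \in e} y_{ij}\right) = \prod_{i\in e}\varphi(y_{ij})$ equals $1$ precisely when $j \le t$ and $e \subseteq S_j$, and is $0$ otherwise. Since $|e| = k > k-1 = |S_j|$, the inclusion $e \subseteq S_j$ is impossible, so every summand of $f^{(d)}_e = \sum_{j=1}^d \prod_{i\in e} y_{ij}$ is killed; this also covers edges through $n$, as $n \notin S_j$. For $\varphi(f_t) \neq 0$, observe that the $(a,j)$-entry of $A$ evaluates to $\varphi\!\left(\prod_{i\in S_a} y_{ij}\right) = 1$ iff $j \le t$ and $S_a \subseteq S_j$. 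As $S_a$ and $S_j$ are distinct sets of the same cardinality $k-1$, one has $S_a \subseteq S_j \iff a = j$, so the evaluated $t \times t$ submatrix is the identity and $\varphi(f_t) = \det \mathrm{Id}_t = 1$.

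The main (and essentially only) obstacle is locating the right specialization, namely realizing that supporting the $j$-th column exactly on $S_j$ simultaneously diagonalizes the chosen minor and annihilates every edge-generator. The crucial structural input is the cardinality gap between an edge ($k$ vertices) and a row index of $A$ ($k-1$ vertices): it is precisely this gap that lets a row-product survive in a single column while forcing every edge-product to die there. After this observation the remaining steps are the routine verifications above.
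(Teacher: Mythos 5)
Your proof is correct, and it takes a genuinely different route from the paper's. The paper argues in two steps: first it shows that $f_t$ is a nonzero polynomial of $S$, by induction on $t$ via Laplace expansion along the last column, observing that the term $\bigl(y_{i_1^{(t+1)}\,t+1}\cdots y_{i_{k-1}^{(t+1)}\,t+1}\bigr)f_t$ cannot be cancelled by the other cofactor terms; second, it rules out membership in $L_H^{\K}(d)$ by a monomial-support argument: every monomial occurring in an element of $L_H^{\K}(d)$ is divisible by a product $y_{i_1 j}\cdots y_{i_k j}$ of $k$ variables sharing a column index, whereas every monomial of a minor of $A$ is squarefree and involves only $k-1$ variables with any given column index. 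Your single evaluation homomorphism replaces both steps at once: the cardinality gap $|e|=k>k-1=|S_j|$ annihilates every generator, and the distinctness of the row sets $S_1,\ldots,S_t$ diagonalizes the evaluated submatrix, giving $\varphi(f_t)=\det \mathrm{Id}_t=1$; both routes are elementary and work over an arbitrary field. What your route buys in addition: since $\varphi$ kills all of $L_H^{\K}(d)$, it also kills its radical, so you in fact prove the stronger statement that $f_t\notin\sqrt{L_H^{\K}(d)}$, i.e.\ $f_t$ does not vanish identically on $\V(L_H^{\K}(d))$, whereas the paper's support argument only yields non-membership in the ideal itself (powers of $f_t$ escape its hypothesis). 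What the paper's route buys: the support argument applies verbatim to every nonzero minor of $A$, for arbitrary choices of rows and columns, which is the form in which the lemma is later invoked (e.g.\ for $I_w(A)$ and $I_c(B)$ in \ref{p20}); your construction adapts to that generality just as easily, by letting column $j_b$ of the chosen submatrix switch on exactly its row set $S_{a_b}$, but this extension should be stated if the lemma is to be used that way.
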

  \begin{proof}
    First we show. 

    \noindent {\sf Claim:} $f_t$ is non-zero in $S$.

    \noindent $\triangleleft$ 
    We prove the assertion by induction on $t$. For $t=2$ we have 
    $$f_2=(y_{i_11} \cdots y_{i_{k-1}1}) (y_{j_12} \cdots y_{j_{k-1}2}) -
	   (y_{j_11} \cdots y_{j_{k-1}1}) (y_{i_12} \cdots y_{i_{k-1}2})$$
    for two different $(k-1)$-sets 
    $\{ i_1 < \cdots < i_{k-1}\} , \{ j_1 < \cdots < j_{k-1}\} \in U$.
    Clearly, $f_2 \neq 0$. 

    Assume that for $2 \leq t \leq u-1$ the claim is true and
    $0 \neq f_t\in I_t(A)$. For $1 \leq j \leq u$, let 
    $\{i_1^{(j)},\ldots, i_{k-1}^{(j)}\}$ be 
    the $(k-1)$-subset in $U$ indexing the $j$\textsuperscript{th} row of $A$. 
    We expand the minor $f_{t+1}$ along the 
    $(t+1)$\textsuperscript{st} column and we get 
    \begin{align*}
      f_{t+1}=\Big(y_{i_1^{(t+1)}\,t+1} \cdots y_{i_{k-1}^{(t+1)}\,t+1}\Big)\, f_t+
      \sum_{j=1}^{t}(-1)^{j+t+1}\Big(y_{i_1^{(j)}\,t+1} \cdots 
        y_{i_{k-1}^{(j)}\,t+1}\Big)\,g_j
    \end{align*}
    where for every $1\leq j \leq t$, $g_j$ is a $t$-minor corresponding to 
    the first $t$ columns of $A$ and the rows with index in $[t+1] 
    \setminus \{j\}$. 
    Since monomials divisible by 
    $y_{i_1^{(t+1)}\,t+1} \cdots y_{i_{k-1}^{(t+1)}\,t+1}$
	only appear in the term 
    $(y_{i_1^{(t+1)}\,t+1} \cdots y_{i_{k-1}^{(t+1)}\,t+1}) f_t$, 
    we conclude that $f_{t+1}$ is a non-zero $(t+1)$ minor of $A$. 
    $\triangleright$ 

    We can now prove the following claim which immediately 
    implies the assertion of the lemma.

    \noindent {\sf Claim:} $f_t \not\in L_H^\K(d)$. 

    \noindent $\triangleleft$ Every polynomial $f$ in the ideal 
    $L_H^{\K}(d)$ is an $S$-linear combination of generators of the ideal. 
    Therefore, every monomial in the support of $f$ is a multiple of a 
    monomial of the form $y_{i_{1} j} y_{i_{2} j}\ldots y_{i_{k} j}$. 
    But monomials in the support of minors of $A$ are squarefree and are 
    divisible by only $k-1$ variables of the same second index.
    Thus $f_t \not\in L_H^{\K}(d)$ and the assertion follows.
    $\triangleright$
  \end{proof}

  The following definition introduces the key combinatorial structure for 
  our needs.

  \begin{defn}
	  For integers $k,c > 0$ such that $0 < k -1 \leq n-c$ let 
    $W$ be a set of $(k-1)$-subsets of $[n-c]$ with $|W| > 0$.
	  .
    By $H_{W,c}$ we denote the 
    hypergraph $$\Big([n], \Big\{ \{i_1,\ldots,i_{k}\}~\Big|~ 
    \{i_1,\ldots,i_{k-1}\} \in W, ~ i_k\in \{n-c+1,\ldots,n\}\Big\}\Big).$$
  \end{defn}

  \begin{prop}\label{p20}
    Let $H=([n],E)$ be a $k$-uniform hypergraph.
	  For integers $k,c > 0$ such that $0 < k -1 \leq n-c$ let 
    $W$ be a non-empty set of $(k-1)$-subsets of $[n-c]$.
    If $L_H^{\K}(d)$ is prime, then $H$ does not contain $H_{W,c}$ for any
    set $W$ of $(k-1)$-subsets of $[n-c]$ with $|W|+c > d$.
  \end{prop}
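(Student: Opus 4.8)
The plan is to prove the contrapositive: assuming $H$ contains $H_{W,c}$ with $|W|+c>d$, I will produce two elements outside $L_H^{\K}(d)$ whose product lies in $L_H^{\K}(d)$, so that $S/L_H^{\K}(d)$ is not a domain and hence $L_H^{\K}(d)$ is not prime. First I reduce to the case $|W|+c=d+1$: since $|W|+c\geq d+1$ I may pass to a subset $W_0\subseteq W$ and to the last $c_0$ special vertices $\{n-c_0+1,\dots,n\}$ with $|W_0|+c_0=d+1$ and $1\leq |W_0|,c_0\leq d$; the hypergraph $H_{W_0,c_0}$ is still contained in $H$, so all of its edge polynomials lie in $L_H^{\K}(d)$. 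Writing $B=(Y_{w,j})_{w\in W_0,\,j\in[d]}$ with $Y_{w,j}=\prod_{i\in w}y_{ij}$ (a $|W_0|\times d$ matrix) and $Z=(y_{mj})$ for the $c_0$ special vertices (a $c_0\times d$ matrix of variables), the generators of $L_{H_{W_0,c_0}}^{\K}(d)$ are precisely the entries of the product $BZ^{T}$.

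The core of the argument is a Gram-determinant identity. Stack the two matrices into the $(d+1)\times d$ matrix $\binom{B}{Z}$ and form the $(d+1)\times(d+1)$ Gram matrix $N=\binom{B}{Z}\binom{B}{Z}^{T}$. Since the inner dimension $d$ is strictly smaller than $d+1$, we have $\rank N\leq d$ and therefore $\det N=0$ as an identity in $S$. On the other hand, the off-diagonal blocks of $N$ are $BZ^{T}$ and $ZB^{T}$, whose entries $\sum_{j}Y_{w,j}y_{mj}=f^{(d)}_{w\cup\{m\}}$ are generators of $L_H^{\K}(d)$; thus modulo $L_H^{\K}(d)$ the matrix $N$ becomes block diagonal and
\[
   \det(BB^{T})\cdot\det(ZZ^{T})\;\equiv\;\det N\;=\;0 \pmod{L_H^{\K}(d)}.
\]
Hence $\det(BB^{T})\cdot\det(ZZ^{T})\in L_H^{\K}(d)$, and it remains only to show that neither factor lies in $L_H^{\K}(d)$.

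For this I use the membership criterion established in the proof of \ref{lem:nonvan}: every monomial in the support of an element of $L_H^{\K}(d)$ is divisible by a monomial $\prod_{i\in e}y_{ij}$ for some edge $e$ and some second index $j$, i.e.\ by $k$ distinct variables sharing a common second index. For $\det(ZZ^{T})$ I exhibit the monomial $\prod_{m}y_{m,j_m}^{2}$ coming from the main diagonal with pairwise distinct indices $j_m$ (possible since $c_0\leq d$); a short permutation argument shows it occurs with coefficient $1$, and since it uses at most one variable per second index it cannot be divisible by any edge monomial, so $\det(ZZ^{T})\notin L_H^{\K}(d)$. For $\det(BB^{T})$ I take $\prod_{w}\prod_{i\in w}y_{i,j_w}^{2}$ with pairwise distinct $j_w$ (possible since $|W_0|\leq d$); a permutation and term-selection argument in the spirit of \ref{lem:nonvan} shows it survives with nonzero coefficient, while at each occurring second index only the $k-1$ variables indexed by the corresponding $w$ appear, so no edge monomial (which needs $k$ variables at one index) can divide it. Thus $\det(BB^{T})\notin L_H^{\K}(d)$, and having two elements outside $L_H^{\K}(d)$ whose product is inside, $L_H^{\K}(d)$ is not prime.

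The main obstacle is this last step: proving that the two Gram determinants are nonzero modulo $L_H^{\K}(d)$. The divisibility criterion disposes of the edge monomials once the correct monomial is identified, but one must still verify that the chosen diagonal monomials are not cancelled by contributions of other permutations and term choices. This is precisely the kind of survival bookkeeping carried out inductively in \ref{lem:nonvan}. By contrast, the reduction to $|W|+c=d+1$ and the vanishing identity $\det N=0$ are purely formal and hold over any field, so this route avoids invoking the complete intersection characterization and treats all cases $|W|+c>d$, including $|W|+c=d+1$, uniformly.
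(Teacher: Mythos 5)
Your proposal is correct, but it takes a genuinely different route from the paper. The paper argues by contradiction inside the quotient ring $T=S/L_H^{\K}(d)$: after the same reduction to $|W|+c=d+1$, it observes that the matrix product $AB$ of the $(k-1)$-fold product matrix and the matrix of variables for the last $c$ vertices vanishes in $T$, passes to the field of fractions of the domain $T$, invokes rank subadditivity to conclude $\rank(A)+\rank(B)\leq d$, and then derives a contradiction with \ref{lem:nonvan}, which guarantees that the maximal minors of both matrices are non-zero in $T$. You instead prove the contrapositive by exhibiting explicit zero divisors: the identically vanishing Gram determinant $\det(CC^T)=0$ for the stacked $(d+1)\times d$ matrix $C$, combined with the fact that the off-diagonal blocks consist of edge polynomials, gives $\det(BB^T)\cdot\det(ZZ^T)\in L_H^{\K}(d)$, and the monomial-support criterion from the proof of \ref{lem:nonvan} (every monomial of an element of the ideal is divisible by $k$ distinct variables sharing a second index) shows neither Gram determinant lies in the ideal. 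Your approach buys constructiveness and stays entirely at the level of polynomial identities, with no passage to fraction fields; the cost is that \ref{lem:nonvan} cannot be cited verbatim, since it concerns minors of the rectangular matrices rather than Gram determinants, so you need the extra coefficient-survival arguments you sketch. Those sketches are sound: because the chosen second indices $j_m$ (resp.\ $j_w$) are pairwise distinct, only the identity permutation and the diagonal term choices can produce the target monomial, so it appears with coefficient exactly $1$, valid in every characteristic. One small correction: you claim your route "avoids invoking the complete intersection characterization," but the paper's proof of this proposition does not invoke \ref{Sym} either; both arguments are independent of the symmetric-algebra machinery, which enters only in the proofs of \ref{th1} and \ref{th2}.
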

  \begin{proof}
    Suppose $H$ contains $H_{W,c}$ for some $W$ and $c$ such that 
    $|W|+c > d$. Set $w = |W|$. We can assume that $w +c=d+1$. Set 
    $T=S/L_H^{\K}(d)$, 
    $A=(y_{i_{1}j} \cdots y_{i_{k-1}j})_{\{i_1,\ldots, i_{k-1}\} \in W, j \in [d]}$ 
	  and $B=(y_{jr})_{(r,j) \in \{n-c+1,\ldots,n\} \times [d]}$.
	  By our assumptions $A$ and $B$ are not empty matrices.
    The entries of $AB$ are the generators of the ideal 
    $L_{H_{W,c}}^{\K}(d)$, hence $AB=0$ in $T$. Since $T$ is a domain, 
    the matrix equation $AB=0$ can be seen as an identity over the field of 
    fractions of $T$. By linear algebra we deduce that 
    $\rank(A)+\rank(B) \leq d$. From 
    $w+c=d+1$ it follows that either $\rank(A) < w$ or $\rank(B) < c$. 
    Therefore, $I_w(A)=0$ or $I_c(B)=0$. But from \ref{lem:nonvan} it follows 
    that $I_w(A)$ and $I_c(B)$ are non-zero in $T$. This is a contradiction. 
    Hence $H$ does not contain any $H_{W,c}$ for $W$ and $c$ such that
    $w+c > d$. 
  \end{proof}

  The following remark will serve as the induction base for the proof of
  \ref{th1} and \ref{th2}.

  \begin{rem}
    \label{rem:base} 
    For $n \leq k$ either $L_H^\K(d) = (0)$
    or $L_H^\K(d) = (f_{1,\ldots, n}^d)$. For $d \geq 2$ the polynomial
    $f^d_{1,\ldots,n}$ is irreducible. Hence for $n \leq k$ and $d \geq 2$,  
    $L_H^\K(d)$ is a complete intersection and prime.
  \end{rem}

  \begin{proof}[Proof of \ref{th1}]
    We prove the theorem by induction on $n$. The induction base 
    for $n \leq k$ and all $d\geq 2$ is laid in \ref{rem:base}.

    Assume that $n > k$. 
    Let 
    $A=(y_{i_1j} \cdots y_{i_{k-1}j})_{\{i_1,\ldots, i_{k-1}\} \in U,j \in [d]} \in R^{u \times d}$.
    Then $A^T$ defines a linear map $R^{u} \longrightarrow R^{d}$. 
    From $L_{H'}^{\K}(d)= L_{H}^{\K}(d) \cap S' $ it follows that 
    $L_{H'}^{\K}(d)$ is prime and hence $R$ is a domain. By induction we 
    deduce that $R = S'/L_{H'}^{\K}(d)$ is a complete intersection. 
    Thus we can apply \ref{Sym}.

    The definition of $U$ implies that $H_{U,1} \subseteq H$. 
    Since $L_{H}^{\K}(d)$ is prime, \ref{p20} implies $u+1 < d+1$, i.e. $ u < d $. 
    By \ref{lem:nonvan} we have that the minors of the matrix $A$ are non-zero in $R$. 
    Therefore, $I_u(A) \not=0$ in the domain $R = S'/L_{H'}^{\K}(d)$.

    Hence by \ref{Sym}(2) it follows that 
    $\height(I_t(A)) \geq u - t + 2 > u-t+1$ for all $t = 1,\ldots, u$.
    Then by \ref{Sym}(1) $S/L_H^\K(d)$ is a complete intersection.
  \end{proof}

In order to apply \ref{Sym}(2) to prove \ref{th2} 
we need the following crucial lemma about heights of ideal of minors.

\begin{lem}\label{height of minors}
  For integers $ n, u > 0$ and $ k \geq 2$ assume $U = \{ A_1,\ldots, A_u\}$ for distinct 
  $(k-1)$-subsets $A_1,\ldots, A_u$ of $[n]$.
	For elements $(y_{ij})_{i \in [n],j\in[d]}$ from the Noetherian ring $R$ and 
	variables $(y_{i\,d+1})_{i \in [n]}$ define 
	$m_{ij} = \prod_{\ell \in A_i} y_{\ell j}$ for $i \in [u]$ and $j \in [d+1]$.
  Consider the matrix
  $$
	M= \left[
	\begin{array}{cccc}
	m_{11} & m_{12} & \cdots & m_{1d} \\
	m_{21} & m_{22} & \cdots & m_{2d} \\
	\vdots & \vdots & & \vdots \\
	m_{u1} & m_{u2} & \cdots & m_{ud}
	\end{array}
	\right]
  $$
  with entries in $R$ and the matrix 
  $M'$ arising from $M$ by adding the new column $$ [m_{1d+1},\cdots,m_{ud+1}] ^T$$ with entries in 
	$T =R[Y] = R[y_{1d+1},\cdots,y_{nd+1}]$.  

  Then for all $ 1 < t \leq u $ we have 
	\begin{align}
		\label{eq:height}
		\height \, I_t(M') & \geq \min \{\height \,I_{t-1}(M) , \height\, I_t(M)+1\} .
	\end{align}
\end{lem}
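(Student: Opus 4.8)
The plan is to bound the height of an arbitrary minimal prime $P$ of $I_t(M')$ in $T = R[Y]$ and then take the minimum over all such $P$; if $I_t(M') = T$ the inequality is vacuous, so assume otherwise. Write $\mathfrak p = P \cap R$. The first observation is that every $t$-minor of $M$ is also a $t$-minor of $M'$ (one avoiding the new column), so $I_t(M) \subseteq I_t(M') \subseteq P$, whence $I_t(M) \subseteq \mathfrak p$ and $\height \mathfrak p \geq \height I_t(M)$. The extension $R \hookrightarrow T$ is a polynomial extension, hence faithfully flat, so the localized map $R_{\mathfrak p} \to T_P$ is flat local and the dimension formula for flat local homomorphisms gives
$$\height_T P = \height_R \mathfrak p + \height_{\kappa(\mathfrak p)[Y]} P_{\mathrm{fib}},$$
where $P_{\mathrm{fib}}$ is the image of $P$ in the fibre ring $\kappa(\mathfrak p)\otimes_R T = \kappa(\mathfrak p)[Y]$. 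This reduces the problem to controlling the fibre contribution $\height P_{\mathrm{fib}}$ and, importantly, requires no catenary hypothesis on $R$.

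I would then split into two cases according to whether $I_{t-1}(M) \subseteq \mathfrak p$. If $I_{t-1}(M) \subseteq \mathfrak p$, then $I_{t-1}(M)T \subseteq P$, and since adjoining variables preserves heights we obtain $\height P \geq \height\big(I_{t-1}(M)T\big) = \height I_{t-1}(M)$, which already dominates the claimed minimum. The substantive case is $I_{t-1}(M) \not\subseteq \mathfrak p$: here it suffices to prove $\height P_{\mathrm{fib}} \geq 1$, equivalently $P \neq \mathfrak p T$, for then the displayed formula and $\height \mathfrak p \geq \height I_t(M)$ give $\height P \geq \height I_t(M) + 1$.

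To show $P \neq \mathfrak p T$ I would argue by contradiction, assuming $P = \mathfrak p T = \mathfrak p[Y]$, so that every $t$-minor of $M'$ lies in $\mathfrak p[Y]$. Using $I_{t-1}(M) \not\subseteq \mathfrak p$, choose a $(t-1)$-minor $g$ of $M$ with $g \notin \mathfrak p$, say on row set $R_1$ and column set $C_1$, and pick a row index $i \in [u]\setminus R_1$ (available since $t-1 < t \leq u$). Let $D$ be the $t$-minor of $M'$ on rows $R_1 \cup \{i\}$ and columns $C_1 \cup \{d+1\}$. Expanding $D$ along the new column yields $D = \sum_{r \in R_1 \cup \{i\}} \pm c_r\, m_{r,d+1}$, where each cofactor $c_r$ is a $(t-1)$-minor of $M$ lying in $R$ (in particular $c_i = \pm g$), while the new-column entries $m_{r,d+1} = \prod_{\ell \in A_r} y_{\ell,d+1}$ are monomials in the new variables $Y$. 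The key combinatorial input is that the $A_r$ are distinct $(k-1)$-subsets, so the $m_{r,d+1}$ are pairwise distinct squarefree monomials and hence $R$-linearly independent in the free $R$-module $T = R[Y]$. Consequently $D \in \mathfrak p[Y]$ forces every coefficient into $\mathfrak p$, in particular $g = \pm c_i \in \mathfrak p$, contradicting the choice of $g$.

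The main obstacle is exactly this last step: one must cleanly separate the contribution of the new variables from that of $R$, and it is the distinctness of the subsets $A_1, \ldots, A_u$ — yielding honest $R$-linear independence of the monomials $m_{r,d+1}$ — that lets the cofactor expansion detect the nonvanishing $(t-1)$-minor inside the fibre. A secondary point to handle with care is the legitimacy of the flat dimension formula over an arbitrary Noetherian $R$, which is what permits adding the fibre height $1$ to $\height \mathfrak p \geq \height I_t(M)$ without assuming $R$ catenary. Combining the two cases, every minimal prime $P$ of $I_t(M')$ satisfies $\height P \geq \min\{\height I_{t-1}(M),\, \height I_t(M)+1\}$, and taking the minimum over all such $P$ gives the claimed inequality.
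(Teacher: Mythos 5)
Your proof is correct, and its skeleton matches the paper's: both arguments fix a minimal prime $P$ of $I_t(M')$, split on whether $I_{t-1}(M)\subseteq P$ (equivalently, whether $I_{t-1}(M)\subseteq \mathfrak p = P\cap R$), dispose of the first case via invariance of height under adjoining variables, and in the second case extract the crucial ``$+1$'' from the cofactor expansion of a $t$-minor of $M'$ along the new column, using that the entries $m_{r,d+1}$ are distinct monomials in the fresh variables. Where you genuinely diverge is the dimension-theoretic device in the second case: the paper localizes at the nonvanishing $(t-1)$-minor $f$ (legitimate since $f\notin P$, so $\height P = \height P R_f[Y]$) and notes that $PR_f[Y]$ contains $I_t(M)R_f[Y]$ together with elements $m_{i,d+1}-f^{-1}g_i$ that are ``algebraically independent over $R_f$,'' which forces the height up by one; you instead invoke the dimension formula for flat local homomorphisms, $\height P = \height\mathfrak p + \height P_{\mathrm{fib}}$, reduce to showing $P\neq\mathfrak p T$, and get a contradiction by reading off the coefficient $\pm g$ of the monomial $m_{i,d+1}$ in the expanded minor $D\in\mathfrak p[Y]$. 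Each route buys something: the paper's localization argument is more elementary (no flatness or fiber theory), but its ``algebraically independent'' step silently relies on exactly the distinctness-of-monomials observation that your coefficient-extraction argument makes explicit; your version cleanly separates the base contribution $\height\mathfrak p\geq\height I_t(M)$ from the fiber contribution and works uniformly over any Noetherian $R$ without choosing a distinguished position for $f$, at the cost of citing the flat fiber dimension theorem.
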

\begin{proof}
  Let $ P $ be a minimal prime ideal of $ I_t(M') $ in $ T $. If $ P \supseteq I_{t-1}(M) $ then $ \height P \geq \height I_{t-1}(M)$.
  If $ P \nsupseteq I_{t-1}(M)  $ then there exists a $ (t-1) $ minor $ f $ of $ M $ which is not in $ P $. 
  We assume that $ f $ is the minor corresponding to the first $ (t-1) $ rows and columns of $ M $. 
  If follows that $ \height \, P=\height\, PR_f[Y]$. Furthermore $ PR_f[Y]  $ contains $ I_t(M) R_f[Y]$ and 
  elements $ (m_{id+1} -f^{-1}g_i : i=t,\cdots,u ) $ with $ g_i \in T $. Since every single element 
  $ m_{id+1} -f^{-1}g_i $ is algebraically independent over $ R_f $, we have 
  $$ \height\, PR_f[Y] \geq \height \,I_t(M)R_f[Y] +1 \geq \height \,I_t(M)+1.$$

  Now \eqref{eq:height} follows.
\end{proof}

Now we are in position to provide the proof of \ref{th2}. Our arguments are similar to those used in 
\begin{proof}[Proof of \ref{th2}]
  We prove the theorem by induction on $n$ and $d$. The induction base 
  for $n \leq k$ and $d\geq 2$ is laid in \ref{rem:base}.

  Let $n > k$. 
  Then by \ref{rem:symmetric} $S/L_{H}^{\K}(d)$ is the symmetric 
  algebra of the cokernel of the $R$-linear map 
    $R^{u}\xrightarrow{A^T} R^{d}$ given by the $u \times d$-matrix 
    $A=(y_{i_1\,j} \cdots y_{i_{k-1}\,j})_{\{i_1,\ldots,i_{k-1}\} \in U,j\in [d]}$. 
    Since $L_H^{\K}(d-1)$ is a complete intersection, it follows that 
    $L_{H'}^{\K}(d-1)$ is a complete intersection. Therefore, by induction 
    on $n$ we have that $L_{H'}^{\K}(d)$ is prime 
    and hence $R$ is a domain. Since the polynomials 
    $f_{\{i_1,\ldots,i_{k-1},n\}}^d$ with 
    $\{i_1,\ldots,i_{k-1}\}\in U$ are a regular sequence contained in the ideal 
    $(y_{nl} :~ 1\leq l \leq d) $ we have that $u \leq d$ and by \ref{lem:nonvan} 
    $I_u(A)\neq 0$ in $R$. Therefore, by \ref{Sym}(2) we have that 
    $L_H^{\K}(d)$ is prime if and only if 
    $\height(I_t(A)) \geq u-t+2$ in $R$ 
    for every $1 \leq t \leq u$. Equivalently we should prove that 
    \begin{align*}
      \height(I_t(A)+L_{H'}^{\K}(d)) \geq u-t+2+h
    \end{align*}
    in $S'$ for every $1 \leq t\leq u$, where $h=\height(L_{H'}^{\K}(d))= |E| - u$. 
    Consider the weight vector $\w \in \RR^{n \times (d)}$ that is defined by $\w_{ij}=1 $ and $\w_{id} =0$ 
    for all $j\in [d-1] $ and $i \in [n] $. By construction the initial forms of the standard generators of 
    $L_{H'}^{\K}(d)$ are the standard generators of  $L_{H'}^{\K}(d-1)$. 
    In addition, the standard generators of $I_t(A)$ coincide with their initial forms with respect to 
    $\w$. Therefore
    \begin{align*}
      \iin_\w(I_t(A)+L_{H'}^{\K}(d))\supseteq I_t(A)+ L_{H'}^{\K}(d-1)
    \end{align*}
    and it is enough to prove that $\height(I_t(A)) \geq u-t+2 $ in $ R' $ for every $ 1 \leq t \leq u $, 
    where $ R'=S'/L_{H'}^{\K}(d-1)$. By construction, we can write $R'=R''[y_{1d},\ldots,y_{(n-1)d}]$ 
    with $ R''=\K[y_{ij} :~(i,j) \in [n-1]\times [d-1]]/L_{H'}^{\K}(d-1)$. Let $A'$ be the 
    matrix $ A $ with the $d$\textsuperscript{th} column removed. Then $S/L_H^{\K}(d-1)$ can be seen as the symmetric 
    algebra of the $R''$-module presented as the cokernel of the map $(R'')^u \xrightarrow{{A'}^T} (R'')^{d-1}$. By assumption  
    $ S/L_H^{\K}(d-1) $ is a complete intersection. Hence by \ref{Sym}(1) we have 
    $\height(I_t(A')) \geq u-t+1 $ in $ R'' $ for every $ 1 \leq t \leq u $. 
    

    Since $A$ is obtained from $A'$ by 
    adding a column of square free monomials of degree $ k-1 $ over $ R'' $ by \ref{height of minors} we have 
    \begin{align*}
    \height(I_t(A))\geq \min \{ \height(I_{t-1}(A')), \height (I_t(A'))+1\} \geq u-t+2.
    \end{align*}  
  \end{proof}

  \begin{cor}
    Let $H=([n],E)$ be a $k$-uniform hypergraph. If $L_H^{\K}(d)$ 
    is prime (complete intersection), then $L_H^{\K}(d+1)$ is prime (complete intersection).
  \end{cor}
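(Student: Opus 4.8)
The plan is to obtain both implications by chaining the two main results of this section, exploiting the fact that the complete intersection property at level $d$ forces primality at level $d+1$ (via \ref{th2}), which in turn forces the complete intersection property back (via \ref{th1}).

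For the complete intersection statement I would argue as follows. Assume $L_H^{\K}(d)$ is a complete intersection. Since $d \geq 1$, the shifted index satisfies $d+1 \geq 2$, so \ref{th2} applies with parameter $d+1$: its hypothesis concerns $L_H^{\K}((d+1)-1) = L_H^{\K}(d)$, which is a complete intersection by assumption, and its conclusion is that $L_H^{\K}(d+1)$ is prime. Now \ref{th1}, again valid because $d+1 \geq 2$, converts this primality into the complete intersection property for $L_H^{\K}(d+1)$, which finishes this half.

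For the primality statement, assume $L_H^{\K}(d)$ is prime. When $d \geq 2$, \ref{th1} already gives that $L_H^{\K}(d)$ is a complete intersection, and the previous paragraph then yields that $L_H^{\K}(d+1)$ is prime. The only place where this clean chain breaks is the boundary $d = 1$, since \ref{th1} requires $d \geq 2$, and I expect this to be the sole obstacle. It is handled directly: $L_H^{\K}(1)$ is the monomial ideal generated by $\prod_{i \in e} y_{i1}$ for $e \in E$, and a monomial ideal is prime only when generated by a subset of the variables. For $k \geq 2$ this forces $E = \varnothing$, whence $L_H^{\K}(d) = (0)$ is prime for every $d$; for $k = 1$ the generators $\sum_{j=1}^{d} y_{ij}$ over the distinct singletons $\{i\} \in E$ involve pairwise disjoint variables and hence generate a prime ideal for all $d$. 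In either case $L_H^{\K}(2)$ is prime, completing the argument.
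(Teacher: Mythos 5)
Your proof is correct and follows exactly the route the paper intends: the corollary is stated there without proof, its implicit justification being precisely your chaining of \ref{th2} (applied at level $d+1$, whose hypothesis is the complete intersection property of $L_H^{\K}(d)$) with \ref{th1} (applied at level $d+1\geq 2$). Your explicit treatment of the boundary case $d=1$ in the primality statement---where \ref{th1} is unavailable and you argue directly that primality of the monomial ideal $L_H^{\K}(1)$ forces $E=\varnothing$ when $k\geq 2$, while the $k=1$ case consists of linear forms in disjoint variables---is a sound addition that closes an edge case the paper's unstated argument passes over in silence.
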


\section{Positive matching decomposition for hypergraphs}
\label{sec:3}

In this section we use a combinatorial approach to provide an upper 
bound for the least $d$ for which the ideal $L_H^{\K}(d)$ is prime when 
$ H $ is a $3$-uniform tree.
As mentioned in the introduction our definition of a $k$-uniform tree is more 
restrictive than other definitions from the literature
(see \cite{b}, \cite{bdcv}, \cite{np}).

\begin{defn}
  Let $k \geq 2$. A $k$-uniform hypergraph $ H=(V,E) $ is called a ($k$-uniform) \textit{tree} if 
  \begin{itemize}
    \item[(T1)] For each pair of edges $ e,e' \in E $ we have 
	    $ |e \cap e'| \leq 1 $.
    \item[(T2)]
       For each pair of vertices $ v,v' \in V $, for which there is no 
		  edge in $E$ containing both, there exists a unique 
       sequence $ e_1,\ldots , e_r \in E $ such  that:\\
        \begin{itemize} 
          \item[(a)] $ v \in e_1 $ and $ v' \in e_r $ and $v,v' \not\in 
		   e_2,\ldots,e_{r-1} $,
	  \item[(b)] For each $ 1 \leq i \leq r-1 $, we have $| e_i \cap e_{i+1} |=1$,
	  \item[(c)] For each $ 1 \leq i\neq j \leq r $ where $|i-j| \geq 2$ we have $ e_i \cap e_j = \varnothing$.
        \end{itemize} 
  \end{itemize} 
   We call the unique sequence of edges from (T2) for $ v $ to $ v' $ the 
	$vv'$-sequence and if 
	$ v \in e $ 
	and 
	$ v' \in e' $
	 we call the the unique sequence from (T2) also 
	$ee'$-sequence.
\end{defn}


Clearly, for $k=2$ the $k$-uniform trees are the trees in the usual graph 
theoretic sense.

For the rest of this section it will be convenient to use the following notions.
\begin{itemize}
\item
 For a hypergraph $ H=(V,E) $  and $v \in V$ we denote by 
	$\deg_H(v)=|\{e \in E |~ v\in e \}| $ the \textit{degree} of $v$
	and  $ \Delta(H)= \max \{\deg_H(v) |  ~v \in V \}$ denotes the maximal degree of $ H $.
\item
 A hypergraph $ H=(V,E) $ with $ n $ vertices and $ m $ edges is called star when $ n-1 $ vertices have degree $ 1 $ and a single vertex has degree $m$.

\end{itemize}

Note that $k$-uniform hypergraphs which are stars are $k$-uniform trees.
For that conclusion condition (T2)(c) is crucial.
The following lemma plays an important role in the proof of \ref{tree} 
and \ref{TREE}.

\begin{lem}\label{degree1 vertex}
  Let $k \geq 2$ and $H=(V,E) $ be a $k$-uniform hypergraph which is a tree. 
  Then there exists at least one vertex 
  $ v \in V $, with $\deg_{H}(v)=1 $.
\end{lem}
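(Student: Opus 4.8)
The plan is to mimic the classical proof that a tree has a leaf: take a longest \emph{path} and show that its terminal edge contains a vertex of degree $1$. Since a tree is (tacitly) non-empty and connected, we have $|E|\ge 1$. If $|E|=1$ the statement is immediate, as a single edge consists of $k$ vertices, each of degree $1$; so I assume $|E|\ge 2$. Calling a sequence $e_1,\dots,e_r$ satisfying the analogues of (T2)(b) and (T2)(c) (consecutive edges meeting in exactly one vertex, non-consecutive edges disjoint) an \emph{edge-path}, I would first note that edge-paths exist and have length at most $|E|$, and that, since $H$ is connected with $|E|\ge 2$, some edge-path has length $\ge 2$. Then I fix an edge-path $e_1,\dots,e_r$ of maximal length $r\ge 2$.

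Next I would analyse the terminal edge $e_r$. Writing $x=e_r\cap e_{r-1}$ for the single shared vertex, the set $Y=e_r\setminus\{x\}$ has $k-1\ge 1$ elements, and I claim some $w\in Y$ satisfies $\deg_H(w)=1$. Suppose not. Then every $w\in Y$ lies in a further edge $g\ne e_r$; since $w\ne x$ we have $w\notin e_{r-1}$, and by (T1) $g\cap e_r=\{w\}$. Moreover $g\notin\{e_1,\dots,e_r\}$, since $g=e_i$ with $i\le r-2$ would put $w$ in a non-consecutive intersection $e_i\cap e_r$, contradicting the disjointness built into the edge-path, while $g=e_{r-1}$ is excluded because $w\notin e_{r-1}$. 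The goal is then to show that $e_1,\dots,e_r,g$ is again an edge-path, which contradicts the maximality of $r$.

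For this I must verify the disjointness condition $g\cap e_i=\varnothing$ for all $i\le r-1$, and this is the main obstacle: it is precisely the point where acyclicity of the tree must be used. If instead $g\cap e_j\ne\varnothing$ for some smallest $j\le r-1$, then the edges $e_j,e_{j+1},\dots,e_r,g$ close up into a loose cycle, and I would derive a contradiction from the \emph{uniqueness} clause of (T2): choosing a vertex of $e_j$ and a vertex of $e_r$ that avoid all relevant pairwise intersections (possible because each edge has $k$ vertices while only a bounded number are pinned down by neighbouring edges), one exhibits two distinct admissible sequences joining them. Carrying out this extraction cleanly is the delicate part — in particular one must check that the two chosen vertices lie in no common edge, so that (T2) applies, and that both the direct sequence $e_j,\dots,e_r$ and the detour through $g$ satisfy conditions (a)--(c). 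The small values of $k$ require a little care, but $k\ge 2$ together with the disjointness of non-consecutive edges of the edge-path keeps enough free vertices available. Once this disjointness is established, the extended edge-path contradicts maximality, proving the claim and hence the lemma.

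An alternative, more global route would be to show that the bipartite incidence graph of $H$ (vertices of $H$ on one side, edges on the other, adjacency given by membership) is a tree: connectedness comes from (T2), absence of $4$-cycles from (T1), and absence of longer cycles again from the uniqueness in (T2). This yields $|V|=|E|(k-1)+1$, after which the incidence count $\sum_{v\in V}\deg_H(v)=k|E|$ forces, were every degree at least $2$, the impossible inequality $k|E|\ge 2\big(|E|(k-1)+1\big)$, that is $|E|(k-2)+2\le 0$, which fails for $k\ge 2$. The genuine obstruction is the same (ruling out long cycles via the uniqueness in (T2)), but this formulation isolates it as a single acyclicity statement about the incidence graph.
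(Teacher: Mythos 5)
Your setup is fine up to the point you yourself flag: the maximal edge-path, the fact that $g\cap e_r=\{w\}$, and the fact that $g\notin\{e_1,\dots,e_r\}$ are all correctly argued. The genuine gap is the deferred step — extracting a violation of the uniqueness in (T2) from the configuration $e_j,\dots,e_r,g$ when $g\cap e_j\neq\varnothing$ — and this step cannot be completed as described. There are two concrete obstructions. First, (T2) says nothing about pairs of vertices lying in a common edge, and you have no control over which pairs are covered by edges of $H$ outside your cycle; "choosing vertices that avoid the pairwise intersections" does not guarantee an uncovered pair. Second, and worse, in the case $j=r-1$ the configuration is a triangle of edges $e_{r-1},e_r,g$ meeting pairwise in three \emph{distinct} vertices, and then every detour sequence through $g$ (such as $e_{r-1},g,e_r$) automatically violates (T2)(c), because $e_{r-1}\cap e_r\neq\varnothing$; so the two ways around the triangle never yield two admissible sequences, and uniqueness is never contradicted. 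These failures are not fixable by "a little care": the triangle $K_3$ (for $k=2$) and the Fano plane (for $k=3$) satisfy (T1), satisfy (T2) \emph{vacuously} (every pair of vertices lies in a common edge), and have no vertex of degree $1$. Inside such configurations there is literally no violation of (T2) to extract, so the contradiction you are after does not exist; the same objection kills the step "absence of longer cycles again from the uniqueness in (T2)" in your alternative incidence-graph argument.

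For comparison, the paper's proof is also extremal but anchors the uniqueness argument differently: it fixes a vertex $v'$, chooses $v$ maximizing the length $r$ of the unique $v'v$-sequence, and, assuming $\deg_H(v)\ge 2$, extends that sequence by an edge $e'\ni v$ to a vertex $v''\in e'\setminus\{v\}$. The extension has length $r+1$, exceeding the length $s\le r$ of the unique $v'v''$-sequence, yet satisfies (a) and (b), so it must fail (c); taking the minimal $i$ with $e'\cap e_i\neq\varnothing$ then produces a second $v'v$-sequence $e_1,\dots,e_i,e'$, contradicting uniqueness. The structural advantage is that both competing sequences join pairs of vertices that are assumed uncovered from the outset, so (T2) is invoked directly, whereas your argument must manufacture an applicable pair out of a cycle — exactly where it breaks. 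That said, your difficulty points at something real: the counterexamples above satisfy the paper's definition of a $k$-uniform tree while refuting the lemma as literally stated, and the paper's own proof has a parallel unproved assertion (that in a non-star tree every vertex has a partner sharing no edge with it, which fails for $K_3$ and the Fano plane). So neither argument closes without strengthening the definition of tree; but within the paper's framework, your proof has a gap at the decisive step, not merely an unfinished computation.
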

\begin{proof}
  If the hypergraph $ H $ is a star, then the assertion is obvious. 
  Assume $ H $ be a hypergraph which is not a star. Then for each vertex 
  $ v' \in V $ there exists at least one vertex $ v \in V $, such that 
  $ v' $ and $ v $ do not lie in the same edge. For each such vertex 
  $ v \in V $, by (T1) and (T2), there exists a unique 
  $v'v$-sequence $e_1,\ldots,e_r \in E$. 

  Now let $ v' \in V $ be a fixed vertex of $ H $ and
  $v \in V$ be a vertex for which the unique $v'v$-sequence has maximal length
  $ r $. We claim that $\deg_H(v)=1 $. 
  Assume that $\deg_H(v)\geq 2 $. Then there is at least one edge 
  $ e' \neq e_r$ which contains the vertex $ v $. 
	Let $ v''$ be a vertex in $e' \setminus \{v\}$. By definition there is a unique 
	$v'v''$-sequence $\bar{e}_1,\ldots,\bar{e}_s \in H $. 
  Maximality of $r$ shows that $s \leq r$. 
  The sequence $ e_1,\ldots,e_r,e'$ is also from $ v' $ to $ v'' $ and of 
	length $r+1 > s$ and satisfies (T2)(a) and (b).
  Thus by uniqueness $e' \cap e_i \neq \varnothing$ for some $i < r$
	which implies by (T1) $|e' \cap e_i| =1$. Let $i$ be minimal
	with that property. Then $e_1,\ldots, e_i ,e'$ is a
	$v'v$-sequence. This fact contradicts the uniqueness of the $v'v$-sequence. 
	and $ \deg_H(v)=1 $ follows.
\end{proof}

In the following we write $V(M)$ for the vertex set of a subset $M \subseteq E$  of the
set of edges of a hypergraph $H = (V,E)$. 

\begin{prop}\label{tree}
  Let $k \geq 2$ and let $ H=(V,E) $ be a $k$-uniform hypergraph which is a tree with 
	$ \Delta(H) \geq 2 $. Then there is a positive matching $M \subseteq E$
	such that $V(M)$ contains all vertices $v$ of degree $\deg(v) \geq 2$.
\end{prop}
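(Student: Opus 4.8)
The plan is to split the statement into two tasks: (a) exhibit \emph{some} matching $M\subseteq E$ whose vertex set $V(M)$ contains every vertex of degree $\geq 2$, and (b) check that such an $M$ is positive. For $k$-uniform trees part (b) turns out to be essentially automatic, so the real work lies in the combinatorial construction in (a).

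First I would dispatch (b). Let $A\in\{0,1\}^{E\times V}$ be the incidence matrix with rows $\mathbf 1_e$, $e\in E$, viewed as the linear map $\RR^V\to\RR^E$, $\w\mapsto(\w(e))_{e\in E}$ where $\w(e)=\sum_{i\in e}\w(i)$. I claim the rows $\{\mathbf 1_e\}_{e\in E}$ are linearly independent. This follows by induction on $|E|$ using \ref{degree1 vertex}: a vertex $v$ with $\deg_H(v)=1$ lies in a unique edge $e_0$, so the $v$-coordinate of $\sum_e c_e\mathbf 1_e=0$ forces $c_{e_0}=0$; deleting $e_0$ leaves a $k$-uniform forest with fewer edges, to which the same argument applies component by component, and induction yields $c_e=0$ for all $e$. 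Independence of the rows means that $A$ is surjective, so for the target pattern $\sigma_e=+1$ if $e\in M$ and $\sigma_e=-1$ if $e\notin M$ there is a weight $\w\colon V\to\RR$ with $\w(e)=\sigma_e$ for every $e$. Hence every matching of a $k$-uniform tree is positive, and it remains to solve (a).

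For (a) I would first reduce to a clean covering condition. By (T1) two distinct edges meet in at most one vertex, and any shared vertex has degree $\geq 2$; consequently a set $M$ of edges is a matching with $V(M)\supseteq\{v:\deg_H(v)\geq 2\}$ if and only if every vertex of degree $\geq 2$ lies in \emph{exactly one} edge of $M$. I would build such an $M$ greedily. Fix a vertex $\rho$ of degree $\geq 2$ as a root and order the vertices of degree $\geq 2$ by their distance (the length of the connecting edge-sequence from (T2)) to $\rho$, processing them from farthest to nearest. For the current farthest still-uncovered vertex $w$, let $e^\ast$ be the edge of its connecting sequence pointing towards $\rho$, and let $w'$ be the next vertex of degree $\geq 2$ inside $e^\ast$ on the way to $\rho$. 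The rule is: if $w'$ is not yet covered, put $e^\ast$ into $M$, covering $w$ and $w'$ at once; otherwise put into $M$ one edge $p\ni w$ with $p\neq e^\ast$.

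The crux, and the step I expect to be the main obstacle, is verifying that this greedy choice is always legal and exhaustive. The structural input is a ``farthest vertex'' lemma, proved exactly as in \ref{degree1 vertex}: if $w$ has degree $\geq 2$ and maximal distance from $\rho$, then every edge through $w$ other than $e^\ast$ meets the set of degree-$\geq 2$ vertices only in $w$. This guarantees that the edge $p$ used in the second case is disjoint on high-degree vertices from everything already chosen, so it cannot conflict; and a short analysis of the distance ordering shows that in the first case $e^\ast$ cannot meet an earlier chosen edge either, since any common degree-$\geq 2$ vertex would be a same-distance sibling of $w$ inside $e^\ast$, which, had it been covered earlier, would already have forced $e^\ast$ into $M$ and hence covered $w$. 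The same bookkeeping shows the root is covered by the first of its high-degree neighbours to be processed, so nothing is left uncovered. I would also point out why the tempting shortcut of covering each high-degree vertex by a pendant edge and recursing fails: deleting edges drops the degree of the parent $w'$, turning a degree-$2$ vertex into a degree-$1$ vertex that must still be covered, and such forced covers can become jointly infeasible, for instance when a high-degree vertex is squeezed between two edges already committed to $M$. The parent-pairing rule is precisely what prevents these infeasible configurations from ever arising.
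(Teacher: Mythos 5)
Your second task (positivity) is handled correctly, and by a genuinely different and in fact stronger argument than the paper's: instead of constructing explicit $\pm 1$ weights adapted to the matching (as the paper does, assigning weights according to the stage at which a vertex first appears in its inductive construction), you show that the incidence vectors $\{\mathbf{1}_e\}_{e\in E}$ of a $k$-uniform tree are linearly independent over $\RR$, by peeling off the edge at a degree-one vertex supplied by \ref{degree1 vertex} and recursing on the components of the smaller hypergraph; consequently $\w \mapsto (\w(e))_{e\in E}$ is surjective, so \emph{every} subset of $E$, in particular every matching, is positive. This is correct and cleanly decouples positivity from the construction of $M$; you should just record the auxiliary fact that the connected components of a tree minus an edge are again trees (extract a (T2)-sequence from a shortest connecting walk), a fact the paper itself uses without proof in the proof of \ref{TREE}.

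The gap is in your first task, and as stated it is fatal: the farthest-first greedy with the parent-pairing rule does not work, because your ``farthest vertex lemma'' holds only for vertices at globally maximal distance from $\rho$, while you invoke it for the farthest \emph{still-uncovered} vertex, which can be close to the root. Concretely, take the $3$-uniform path on $17$ vertices with edges $E_i=\{2i-1,2i,2i+1\}$, $i=1,\dots,8$, rooted at $\rho=9$; the degree-two vertices are $3,5,7,9,11,13,15$, at distances $3,2,1,0,1,2,3$. Your greedy processes $3$ and $15$ first and adds $E_2$ (covering $3,5$) and $E_7$ (covering $13,15$); the vertices $5$ and $13$ are then skipped; next, whichever of the distance-one vertices $7,11$ is processed first, say $7$, receives its parent edge $E_4$ (covering $7,9$); now $11$ is uncovered, its parent $9$ is covered, and your second case forces you to add the unique other edge through $11$, namely $E_6=\{11,12,13\}$ --- which meets $E_7\in M$ in the already covered vertex $13$, so $M$ is not a matching. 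By symmetry the opposite tie-break fails identically at vertex $7$ (via $E_3$ meeting $E_2$ in $5$), so no processing order rescues the algorithm, even though the conclusion is true for this tree: $\{E_2,E_4,E_6,E_8\}$ is a matching covering all degree-two vertices. This is exactly the ``squeezed vertex'' configuration that you assert the parent-pairing rule prevents; it does not. The paper's proof avoids it by growing $M$ bottom-up from a single leaf edge in connected layers ($M_i$, the frontier $M_i'$, and the star decomposition $\overline{M}_{i+1}$, with one edge chosen per star), so that each newly chosen edge is automatically disjoint from everything chosen before; some device of this kind --- a connected one-sided sweep, or an exchange/backtracking step --- is needed in place of your distance-ordering argument.
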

\begin{proof}
  For $k=2$ this is a consequence of \cite[Lemma 5.4.(3)]{ac-vw}. Thus
  we may assume $k \geq 3$. 

  Let $ H $ be a $k$-uniform tree with $ \Delta(H)\geq 2$. 
  We define sequences 
  $ (M_i)_{i\geq 1} $, $ (M'_i )_{i\geq 1}$ and $ (\overline{M}_i)_{i\geq 1} $ 
  of subsets of $ E $ as follows:

  By \ref{degree1 vertex} there is $ v \in V $ with $ \deg_H(v)=1 $. 
	Let $\bar{e} \in E$ be the unique edge 
  containing $v$. Set $M'_0=\varnothing$, 
	$ M_1:=\{\bar{e} \} $,
	$\overline{M}_1= \{\bar{e}\}$ and
	$M'_1:=\{ e' \in E\setminus (M_1 \cup M'_0) \mid e'\cap \bar{e} \neq \varnothing\} $.
	Since $\Delta(H) \geq 2$ there are other edges than $\bar{e}$ in $H$. 
	From the fact that $H$ is a tree it then follows that 
	$V(M_1') \setminus V(M_1) 
	 \neq \varnothing$. 

	Assume that for some $i \geq 1$ we have constructed 
	$M_{i}$ and $M_{i}'$ and 
	\begin{itemize}
		\item[(M1)] $V(M_{i}') \setminus V(M_{i}) \neq \varnothing$.
		\item[(M2)]  $M'_i$ is the set of all edges not contained in 
			$M_i$ 
			intersecting some $e \in M_i$ non-trivially
	\end{itemize}

	Define 
	\begin{align} 
	   \label{eq:ovm} 
           \overline{M}_{i+1}  :=\{ e \in E\setminus 
	(M_{i} \cup M'_{i}) \,|~ \exists e' \in M'_{i}
	 : e \cap e' \neq \varnothing\, \}.
	\end{align} 

   \medskip

	\noindent {\sf Claim(a):} $ \overline{M}_{i+1} $ is empty or a disjoint union of stars.

	Assume $\overline{M_{i+1}}$ is non-empty. Let $e \in \overline{M}_{i+1}$ then by definition 
	$e \cap e' = \{v'\}$ for
	some $e' \in M_{i}'$. If $v' \in V(M_i)$ then $e$ also intersects
	an edge from $M_i$ in $v'$ and hence by (M2) $e \in M_i'$ contradicting the
	choice of $e$. 

  We set 
	$ V':=V(M'_{i}) \setminus V(M_{i}) \neq \varnothing$. 
	It follows from the definition that every 
	$e \in \overline{M}_{i+1}$ contains some 
	$v' \in V'$. 

        For $ v' \in V' $ define 
	$ S_{v'} :=\{ e \in \overline{M}_{i+1} \mid v' \in e \} $. 
   By (T1) for every pair $ e', e'' \in S_{v'} $ we have 
   $ e'\cap e''=\{v'\} $.
   It follows that 
	$S_{v'} $ is a 
   star for each $ v' \in V'$. 

	By definition for $ e \in \overline{M}_{i+1} $
	there exists $ e' \in M_{i}'$ such that 
   $e \cap e'=\{v'\}\ $ for some $v' \in V'$. Hence 
   $ e \in S_{v'}$. This implies that 
	$\overline{M}_{i+1} = \bigcup_{v'\in V'} S_{v'}$. Let 
   $ e' \in S_{v'} $ and $ e''\in S_{v''}$ for some $ v' \neq v''$ in $ V'$. 
   Assume $e' \cap e''=\{w\} $. Then by prolonging the unique $vv'$-sequence 
   by $e'$ and the unique $vv''$-sequence by $e''$ we construct two 
   different $ vw $-sequences.
	This contradicts (T2) from the definition of a tree. 
	Therefore, $ e'\cap e''=\varnothing $ and $ \overline{M}_{i+1} $ 
	is a disjoint union of stars.

  \medskip


  Put $$M_{i+1}=M_{i}\cup \{\text{ choose one edge from each star in } \overline{M}_{i+1}\} $$ and 
	$$M'_{i+1}= M'_{i}\cup (\overline{M_{i+1}} \setminus M_{i+1}) \cup \{ e \in E \setminus (M_{i+1} \cup M_{i}') |~\exists e' \in M_{i+1}, ~e \cap e' \neq \varnothing \} .$$ 

By (M2) for $M_i$ and $M_i'$ and 
the construction of $M'_{i+1}$ and 
$\overline{M}_{i+1}$ it follows that
$M'_{i+1}$ is the set of all edges outside of 
$M_{i+1}$ intersecting some $e \in M_{i+1}$
non-trivially.  Thus (M2) is satisfied.

Since there are only finitely many edges and by the construction
of $M_{i+1}$ and $M'_{i+1}$ it follows from (T2) that 
there is an $i$ such that $M_{i+1} \cup M'_{i+1} = E$.
We then stop our construction and set 
$ M:=M_{i+1} $ and $M':= M'_{i+1}$. 
Note that from (T1) and (T2) it follows that if 
$V(M_{i+1}') \setminus V(M_{i+1}) = \varnothing$ then 
$M_{i+1}' \cup M_{i+1} = E$.
If $M_{i+1} \cup M'_{i+1} \neq E$ and 
$V(M_{i+1}') \setminus V(M_{i+1}) \neq \varnothing$ then 
(M1) is satisfied then we increase $i$ by one and continue in
\eqref{eq:ovm}.

\medskip

\noindent {\sf Claim(b):} $ M $ is a matching.

  Let $e,e' \in M$. Let $i$ and $j$ be such that
  $ e \in M_{i+1}\setminus M_{i}$ and
  $ e' \in M_{j+1}\setminus M_{j}$ 
  for some 

  We have to consider two cases.

  In the first case assume $i=j$. 
  Then $ e $ and $ e' $ are chosen from disjoint 
  stars in $\overline{M}_{i+1}$ and thus $ e \cap e'=\varnothing $ .

  In the case $i \neq j$ we may assume $j < i$. 
  Since $M'_{j}$ contains all edges outside of $M_j$ whose intersection with 
  some edge in $M_j$ is non-trivial, it follows $e' \in M'_j$.
  But the edges from $M_i$ are chosen outside of $M'_{i-1} \supset M'_j$.
  This yields a contradiction and it follows that $e \cap e' = \emptyset$.


  \noindent {\sf Claim(c):} $ E=M\cup M' $ and $ M\cap M'= \varnothing$.

  We already have seen that $ E=M\cup M' $. It remains to show that 
  $ M\cap M'= \varnothing$. 
  Assume there is $e' \in M\cap M'  $. Then there exist smallest integers 
  $ i $ and $ j $ such that $ e' \in M_i $ and $ e' \in M'_j $. 
  By definition the edges from $M_i$ are not contained in $M'_{i-1}$ and 
  hence in any $M'_\ell$ for $\ell < i$. 
  Again definition the edges from $M'_j$ are not contained in $M_{j}$ and hence 
  in any $M_\ell$ for $\ell \leq j$. 
  It follows that $M_i \cap M'_j = \emptyset$ contradicting the assumption. 
  The assertion then follows.


\medskip

 \noindent {\sf Claim(d):} $\{~v'\in V~|~\deg_H(v') \geq 2\} \subset V(M)$.

 \medskip

Consider $ A=\{~v' \in V \lvert~\deg_H(v) \geq 2,~v'\notin V(M)\} $. To prove 
the claim we have to show $ A=\varnothing $. 

Assume that $A \neq \varnothing$. 
Let $ v'\in A $ then by definition we have $ \deg_H(v')=s \geq 2$.
Let $v \in V$ be the chosen vertex of degree $1$ in $H$ and $\bar{e} \in E$
the unique edge containing $v$. 
Let further
$ \bar{e}=e_1,\ldots,e_r $ be a $ vv' $-sequence. We choose 
$ v'\in A $ such that $ r $ is minimal. From $v' \in A$ we know that 
$e_r \not\in M$. Let $ e_r \cap e_{r-1}=\{v''\} $.
It follows that $ \deg_H(v'')\geq 2 $. The length of a sequence from $ v $ to 
$ v'' $ is smaller than $ r $ and therefore $v'' \not\in A$. 
Hence one of the edges that contains $ v'' $ is in some $ M_{i} \subseteq M $. 
Let $i$ be minimal with that property. 
By $e_r \not\in M$ we know $e_r \not\in M_i$. 
Thus by (M2) and its definition $ e_r $ is in $ M'_i $
and the the other $\deg_H(v')-1$ edges containing $v' $ 
form a star from $ \overline{M}_{i+1} $. 
By definition one of the edges of this star is in 
$ M_{i+1} $. It follows that $v' \not\in A$.  This is a contradiction. 
Therefore, $ A $ is empty and the claim follows.

\medskip

 \noindent {\sf Claim(e):} $M$ is positive

\medskip

For proving the positivity of $ M $, we consider the following weight on vertices.
For $v \in V$ let $i$ be the minimal index such that $v \in V(M_i)$ or $v \in V(M'_i)$. Then set

  \begin{equation*}
\w(v) := \left\{
\begin{array}{rl}
	1 & v \in V(M_i) \\
	-1 &  v \not\in V(M_i) \text{ and } v \in V(M'_i)
\end{array} \right.
\end{equation*}
Since by (c) for any $v \in V$ there is an $i$ such that $v \in M_i$ or
$v \in M'_i$ the weight is well defined. 
It remains to be shown that $\w(e) > 0$ for $e \in M$ and $\w(e) < 0$ if $e \in M'$.

If $e \in M$ then there is an $i$ such that 
$e \in M_i \setminus M_{i-1}$. Then there is 
at most on vertex $v' \in e$ such that $v' \in M'_{i-1}$ the other vertices 
are contained in $V(M_i) \setminus (V(M_{i-1}) \cup V(M'_{i-1}))$. 
If $i > 1$ then there is a unique vertex $v'$ with this property and if $i=1$ there is none. 
It follows that $\geq k-1 \geq 2$ of the vertices have weights $1$ and 
$\leq 1$ have
weight $-1$. Thus $\w(e) > 0$. 

If $e \in M'$ then there is an $i$ such that 
$e \in M'_i \setminus M'_{i-1}$. 
Then $e$ either contains a 
unique vertex $v'$ which is contained in $V(M_i)$ or a unique vertex 
contained in $V(M'_{i-1})$.  
All other vertices lies in $V(M'_{i}) \setminus (V(M_i) \cup V(M'_{i-1}))$.
It follows that $\geq k-1 \geq 2$ of the vertices have weights $-1$ and $\leq 1$ have
weight $1$. Thus $\w(e) < 0$. 
\end{proof}

\ref{tree} allows us to prove \ref{TREE} by induction of $\Delta(H)$.

\begin{proof}[Proof of \ref{TREE}]
  Clearly, $ \pmd(H)\geq \Delta(H) $. We use induction on $ \Delta(H) $ to 
  show that $ \pmd(H)=\Delta(H) $. For $ \Delta(H)=1 $ there is nothing to 
  prove. Assume $ \Delta(H)\geq 2 $.

  By \ref{tree} there is a positive matching $M$ such that 
  $V(M)$ contains all vertices of degree $\geq 2$. 
  Let $ H' $ be the hypergraph obtained  from $ H $ by removing $ M $. 
	The conditions (T1) and (T2) for $H$ are inherited by the 
	connected components of $ H' $. Thus each connected component of
	$H'$ is a tree. The fact that 
  $V(M)$ contains all vertices of degree $\geq 2$ implies that
	for a connected component $H''$ of $H'$ we have $\Delta(H'') \leq
	\Delta(H)-1$. 
	By induction there we have $\pmd(H'') \leq \Delta(H'') \leq 
	\Delta(H) -1$. Since the pmd of a hypergraph is easily seen to be 
	the maximum of the pmd's of its connected components it follows
	that $\pmd(H') \leq \Delta(H)-1$. Hence $\pmd(H) \leq \Delta(H)$.
\end{proof}

Next we are interested to find an upper bound for $ \pmd(H) $ as a function
of the number of vertices $n$ when $ H $ is a 
$3$-uniform hypergraph. For $2$-uniform hypergraphs, that is
graphs, such a linear bound has been provided in \cite[Lemma 5.4]{ac-vw}. 
Below we provide an attempt to apply a similar method in order to show that 
$ \pmd(H) $ is bounded above by a quadratic function in $n$ when $ H $ is a 
$3$-uniform hypergraph. For that we extend the 
strategy of \cite[Lemma 5.4]{ac-vw} to $3$-uniform hypergraphs.  
We can prove the following.

\begin{prop}\label{matching}
  Let $ H=(V,E) $ be the complete $ 3 $-uniform hypergraph on $ n $ vertices
	and with $\binom{n}{3}$ edges.
  Then for 
  every $ 3 \leq l_1 \leq 2n-3 $ and $ 5 \leq l_2 \leq 2n-1 $, the set 
  $ E_{l_1,l_2}=\{\{a,b,c\}\in E \mid a < b < c,~~ a+b=l_1, b+c= l_2\}$ is a 
  matching and $ E=\bigcup_{ l_1,l_2}E_{l_1,l_2} $. 

  In addition, the cardinality of the set 
	$$E_n:=\{(l_1,l_2)\mid \text{ there exist } 1\leq a < b < c\leq n,~~ 
	l_1=a+b, l_2=b+c\}$$ is $ \frac{3}{2} n^2-\frac{15}{2}n+10 $.
\end{prop}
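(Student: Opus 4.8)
The plan is to prove the two assertions separately, using throughout the fact that an edge $\{a,b,c\}$ with $a<b<c$ lies in $E_{l_1,l_2}$ exactly when $l_1=a+b$ and $l_2=b+c$. Writing such an edge in terms of its middle vertex $b$ gives $a=l_1-b$ and $c=l_2-b$, so each edge of $E_{l_1,l_2}$ has the form $\{l_1-b,\,b,\,l_2-b\}$ with $b$ an integer satisfying $l_1/2<b<l_2/2$ (this is equivalent to $a<b<c$), and conversely every such $b$ yields at most one edge. For the matching claim I would argue as follows. If $E_{l_1,l_2}$ is empty it is trivially a matching, so assume it contains two edges with middle vertices $b\neq b'$. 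Then $l_1<l_2$, and both $b,b'$ lie in $(l_1/2,l_2/2)$, so $b+b'>l_1$ and $b+b'<l_2$. Comparing the three vertices of one edge with the three of the other yields nine coincidence equations, each of which reduces to one of $b=b'$, $\;b+b'=l_1$, $\;b+b'=l_2$, or $b'-b=\pm(l_2-l_1)$. The first contradicts $b\neq b'$, the next two contradict the inequalities just noted, and $b'=b\pm(l_2-l_1)$ pushes $b'$ outside $(l_1/2,l_2/2)$ (using $l_1<l_2$). Hence distinct edges of $E_{l_1,l_2}$ are disjoint. The decomposition $E=\bigcup_{l_1,l_2}E_{l_1,l_2}$ is then immediate: each edge $\{a<b<c\}$ lies in $E_{a+b,\,b+c}$, and $1\le a<b<c\le n$ forces $a+b\in[3,2n-3]$ and $b+c\in[5,2n-1]$.

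For the cardinality of $E_n$ the crucial first step is to reformulate ``$E_{l_1,l_2}\neq\varnothing$'' as a lattice-point condition. From the parametrization, $(l_1,l_2)\in E_n$ iff there is an integer $b$ with $l_1/2<b<l_2/2$, together with $b\le l_1-1$ (so $a\ge 1$) and $b\ge l_2-n$ (so $c\le n$); equivalently the interval $[\lfloor l_1/2\rfloor+1,\ l_1-1]$ meets the interval $[l_2-n,\ \lceil l_2/2\rceil-1]$. Both intervals are nonempty exactly when $l_1\ge 3$ and $l_2\le 2n-1$, and I would check that, within the box $3\le l_1\le 2n-3$, $5\le l_2\le 2n-1$, their intersection is nonempty precisely when the two cross-inequalities $\lceil l_2/2\rceil\ge\lfloor l_1/2\rfloor+2$ and $l_2\le l_1+n-1$ hold. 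This reduction must be carried out carefully, since the floors and ceilings make off-by-one errors easy; evaluating the criterion against small cases (e.g.\ $n=4,5$) is a useful sanity check.

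Finally I would count the pairs satisfying these two inequalities by summing over $l_1$ the number of admissible $l_2$. For fixed $l_1$ the admissible values of $l_2$ fill the interval from $2\lfloor l_1/2\rfloor+3$ up to $\min(2n-1,\ l_1+n-1)$, and the number of integers it contains equals $(n-3)+(l_1\bmod 2)$ when $l_1\le n$ and $2n-3-2\lfloor l_1/2\rfloor$ when $l_1>n$ (both expressions being automatically nonnegative on the relevant range). Summing the first expression over $3\le l_1\le n$ and the second over $n<l_1\le 2n-3$ and adding the two totals should yield $\tfrac{3}{2}n^2-\tfrac{15}{2}n+10$. I expect the main obstacle to be bookkeeping rather than conceptual: the $\lfloor\cdot\rfloor$ and $(l_1\bmod 2)$ terms make the two partial sums depend on the parity of $n$, so one must verify that the parity-dependent contributions cancel and leave the single polynomial claimed. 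Checking the result at a few values of $n$ guards against arithmetic slips in this final step.
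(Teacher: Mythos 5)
Your proposal is correct, and for the counting part it takes a genuinely different route from the paper. For the matching claim both arguments are in the same spirit (a case analysis on which vertices of two edges could coincide), but your middle-vertex parametrization $\{l_1-b,\,b,\,l_2-b\}$ with $l_1/2<b<l_2/2$ is a clean repackaging: all nine coincidence equations collapse to $b=b'$, $b+b'=l_1$, $b+b'=l_2$, or $b'-b=\pm(l_2-l_1)$, each of which is excluded by the interval constraint, whereas the paper handles the coincidences directly on $a,b,c,a',b',c'$ using the ordering. The real divergence is in computing $|E_n|$: the paper proceeds by induction on $n$, showing that every pair in $E_{n+1}\setminus E_n$ comes from a triple with $c=n+1$ and either $a=1$ or $b\in\{n-1,n\}$, which gives exactly $3n-6$ new pairs and avoids all parity bookkeeping; you instead characterize $E_n$ exactly as the lattice points satisfying $\lceil l_2/2\rceil\geq\lfloor l_1/2\rfloor+2$ and $l_2\leq l_1+n-1$ (your reduction here is correct --- the two ``box'' inequalities $l_1\geq 3$ and $l_2\leq 2n-1$ are precisely the nonemptiness of the two intervals, and the two cross-inequalities are the remaining conditions for the intervals to meet) and then sum over $l_1$. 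Your per-$l_1$ counts, $(n-3)+(l_1\bmod 2)$ for $l_1\leq n$ and $2n-3-2\lfloor l_1/2\rfloor$ for $l_1>n$, are correct, and the final summation you left as ``should yield'' does work out: for both parities of $n$ the two partial sums combine to $\tfrac32 n^2-\tfrac{15}{2}n+10$, so to make the argument complete you only need to write out that (parity-split) summation explicitly. What each approach buys: the paper's induction is shorter and arithmetically lighter; your direct count is more informative, since it describes the set $E_n$ itself (not just its size) as an explicit region of lattice points, which would also let one read off the matchings $E_{l_1,l_2}$ that are nonempty.
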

\begin{proof}
  Let $ e=\{a < b <c \} , e'=\{a'<b'< c'\} \in E_{l_1,l_2}$ for some 
  $ 3 \leq l_1 \leq 2n-3 $ and $ 5 \leq l_2 \leq 2n-1 $. Assume $e \neq e'$ and
  $ e\cap e'\neq \varnothing $. 

  If $a=b'$, $b=a'$, $b=c'$ or $c=b'$ then the conditions $a+b=a'+b' = l_1$
  and $b+c=b'+c'=l_2$ yield a contradiction to the order of the elements
  in $e$ and $e'$. 
  Assume $a = c'$ Then $a' < b' <c'=a <b$ which again contradicts $a'+b'=a+b$. 
  Hence $a=a'$ or $b=b'$ or $c=c'$. We consider the case
  $ a=a' $. The other cases follow analogously. Since $ a+b=l_1=a'+b' $, we 
  have $ b=b' $ and by 
  $ b+c=l_2=b'+c' $ we have $ c=c' $. Thus $e=e'$ contradicting the 
  assumptions. It follows that $E_{l_1,l_2}$ is a matching.

  The fact that 
  $ E=\bigcup_{ l_1,l_2}E_{l_1,l_2} $ is obvious. 

  For proving that 
  $| E_n| = \frac{3}{2} n^2-\frac{15}{2}n+10$ we use induction on $n$.

  For $ n=3 $ we have $|E_n| =|\{(3,5)\}| = 1 = \frac{3}{2}\,3^2- \frac{15}{2}\,3 +10$. 
  Suppose we have proved for some $n$ that 
  $|E_n|= \frac{3}{2} n^2-\frac{15}{2}n+10 $. 
  In the induction step we show that 
  $|E_{n+1} \setminus E_n|= 3n-6$ from which the assertion follows
	by an elementary summation.

  Let $(l_1,l_2)\in E_{n+1} \smallsetminus E_n$. Then there exist 
  $ 1\leq a< b< c\leq n+1 $ such that $ l_1=a+b $ and $ l_2=b+c $. By
  $ (l_1,l_2)\not\in E_n $ we have $ c=n+1 $. 

  We show that either $ a=1 $ or $ b = n,n-1 $. 
  In case $ a \geq 2 $ and $ b \leq n-2 $ we have $ 1\leq a-1 < b+1< n$. 
  This shows that $ (l_1,l_2) \in E_n $ for $(a-1,b+1,n)$. 

	Consider the case $b = n-1$ (resp., $b=n$) 
	then any $1 \leq a \leq n-2$ (resp., $1 \leq a \leq n-1$)
	determines a different $(l_1=a+n-1,l_2=2n)$ (resp., 
	$(l_1=a+n,l_2 = 2n+1)$) not contained in $E_n$ yielding 
  $n-2+n-1 = 2n-3$ cases.

  Now assume $a=1$ and $1 < b \leq n-2$. Assume we have 
	$1 \leq a' < b' < c' \leq n$
  with $a+b = a'+b'$ and $b+c = b'+c'$.
  From $c = n+1$ we infer $c'< c$ and $b' > b$. But $a' \geq a$ and
  hence $a'+b' > a+b$ contradicting $a+b=a'+b'$. 
  It follows that each pair $a=1 < b \leq n-2$ determines $(l_1,l_2)
  \not\in E_n$. 
  By $l_2 = b+c \leq n-2+n+1 = n-1 < a+b \leq 1+n-2 = n-1$.  
  Since there are $n-3$ cases $1=a < b \leq n-2$ this count contributes
  $n-3$ cases.

  Thus there are $2n-3+n-3=3n-6$ elements in $E_{n+1} \setminus E_n$
	as desired. 
\end{proof}

In order to complete our the strategy following \cite[Lemma 5.4]{ac-vw}
we would have to prove the following conjecture.

\begin{conj}\label{CONJ}
 Let $ H=(V,E) $ be a $ 3 $-uniform hypergraph as in \ref{matching}. Then $ E_{l_1,l_2}=\{\{a,b,c\}\in E \mid a < b < c,~~ a+b=l_1, b+c= l_2\}$ is a positive matching.
\end{conj}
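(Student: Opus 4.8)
The plan is to treat \ref{CONJ} not in isolation but as the final ingredient needed to promote the decomposition $E=\bigcup_{l_1,l_2}E_{l_1,l_2}$ from \ref{matching} to a positive matching decomposition, and thereby (via \eqref{eq:imp}) to bound $\pmd(H)$ by $|E_n|=\tfrac32 n^2-\tfrac{15}2 n+10$. First I would fix a linear order on the index pairs $(l_1,l_2)$ --- concretely, process them by increasing $l_2$ and, for equal $l_2$, by increasing $l_1$ --- and read the claim ``$E_{l_1,l_2}$ is a positive matching'' in the sense demanded by the definition of a pmd: I only need a weight $\w\colon V\to\RR$ with $\w(e)>0$ for $e\in E_{l_1,l_2}$ and $\w(e)<0$ for every triple $e$ that has \emph{not} yet been removed. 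This relaxation is essential: a short computation (the triples $\{1,4,5\},\{2,3,6\}\in E_{5,9}$ together with $\{1,4,6\}$ and $\{2,3,5\}$ for $n=6$ force both $\w(5)>\w(6)$ and $\w(6)>\w(5)$) shows that $E_{l_1,l_2}$ is in general \emph{not} a positive matching of the full complete hypergraph, exactly as $E_l$ fails to be a positive matching of $K_n$ in the graph case of \cite[Lemma 5.4]{ac-vw}.

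With the ordering fixed I would record the parametrization already implicit in \ref{matching}: a triple $\{a<b<c\}$ lies in $E_{l_1,l_2}$ iff $a=\sigma-l_2$, $b=l_1+l_2-\sigma$ and $c=\sigma-l_1$, where $\sigma=a+b+c$; thus the members form a one-parameter family indexed by $\sigma$ with the extreme spread $c-a=l_2-l_1$ held constant. Every remaining triple then satisfies $b+c\ge l_2$, and if $b+c=l_2$ it satisfies $a+b\ge l_1$. The target weight must therefore (i) strongly penalize the two largest elements summing to more than $l_2$, and (ii) inside the slice $b+c=l_2$ select the minimal value $a+b=l_1$, mimicking the linear weight $\w(i)=\tfrac{2l+1}{4}-i$ that settles the graph case on the residual graph. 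I would attempt an explicit weight of the shape $\w(i)=-K\,\phi(i)+\psi(i)$ with $K\gg 0$, where $\phi$ is convex and increasing so that $\phi(a)+\phi(b)+\phi(c)$ is dominated by the two largest coordinates, and $\psi$ is linear and tuned to the threshold $l_1$; positivity would then be checked by a direct inequality separating $b+c=l_2$ from $b+c>l_2$.

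Because the natural quadratic penalty $-(a+b-l_1)^2-(b+c-l_2)^2$ contains the non-separable cross term $2b(a+c)$ and hence is \emph{not} of the additive form $\w(a)+\w(b)+\w(c)$, I expect no closed formula to work verbatim, and my fallback would be to phrase the existence of $\w$ as strict feasibility of a linear program and invoke Farkas' lemma: the system is feasible unless some nonnegative combination of the characteristic vectors of the remaining non-members equals a positive combination of the characteristic vectors of $E_{l_1,l_2}$. The combinatorial heart then becomes ruling out such a relation, using that $E_{l_1,l_2}$ is a matching (so its members have pairwise disjoint supports) and that the chosen ordering has already deleted every triple of strictly smaller $(l_2,l_1)$.

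The main obstacle --- and, I suspect, the reason the statement is left as a conjecture --- is precisely steps (i)--(ii): capturing the two simultaneous, order-dependent pairwise-sum conditions ``$a+b=l_1$'' and ``$b+c=l_2$'' by a single \emph{separable} weight. In the graph case one pairwise sum is controlled by one linear functional and the residual edges are exactly those of larger sum, so a linear weight suffices; for $3$-uniform hypergraphs the second condition forces genuine nonlinearity, and one must simultaneously dominate all triples with $b+c>l_2$ (which may have very small $a+b$) while finely separating $a+b=l_1$ from $a+b>l_1$ inside the level set $b+c=l_2$. Balancing the penalty scale $K$ against the threshold tuning uniformly in $n,l_1,l_2$, and verifying that the Farkas obstruction never occurs, is the delicate point on which the whole argument hinges.
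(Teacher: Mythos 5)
You have not proved \ref{CONJ}, and you should be aware that the paper does not prove it either: the authors establish \ref{matching} (the matching property and the count $|E_n|=\tfrac{3}{2}n^2-\tfrac{15}{2}n+10$) and then explicitly leave \ref{CONJ} as an open conjecture on which their Conjecture \ref{con:conj} depends. Your attempt is a research plan rather than a proof, and the gap is exactly where you say it is: you never exhibit a weight function $\w$ for even a single pair $(l_1,l_2)$ in the residual hypergraph (the graph-case linear weight does not extend, as you note), and the Farkas/linear-programming reformulation is only a restatement of what must be shown --- you do not rule out the dual obstruction. So the entire content of the conjecture, namely the existence of a \emph{separable} weight $\w(a)+\w(b)+\w(c)$ that simultaneously enforces the two pairwise-sum conditions $a+b=l_1$ and $b+c=l_2$ against all remaining triples, is still missing at the end of your argument, as your closing paragraph concedes.

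That said, your preliminary observation is correct and is strictly more than the paper records: \ref{CONJ} is \emph{false as literally stated}. By the paper's definition, a positive matching $M$ of $H$ requires $\w(e)<0$ for every edge $e\in E\setminus M$, and in \ref{CONJ} the hypergraph $H$ is the complete $3$-uniform hypergraph from \ref{matching}. For $n=6$ one has $E_{5,9}=\bigl\{\{1,4,5\},\{2,3,6\}\bigr\}$, and the non-member triples $\{1,4,6\}$ and $\{2,3,5\}$ force
\begin{align*}
\w(1)+\w(4)+\w(5)>0,\quad \w(1)+\w(4)+\w(6)<0 &\ \Rightarrow\ \w(5)>\w(6),\\
\w(2)+\w(3)+\w(6)>0,\quad \w(2)+\w(3)+\w(5)<0 &\ \Rightarrow\ \w(6)>\w(5),
\end{align*}
a contradiction. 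Hence the conjecture can only be meant, and can only possibly be true, in the relaxed sense you adopt: under a suitable ordering of the pairs $(l_1,l_2)$, each $E_{l_1,l_2}$ should be a positive matching of the hypergraph remaining after the earlier parts have been deleted. This weaker reading is all that the definition of pmd and the application to \ref{con:conj} require, and it is exactly how the analogous decomposition behaves in the graph case of \cite[Lemma 5.4]{ac-vw}, where $E_l$ is a positive matching only of the residual graph. If you pursue this, the reformulated residual statement is the one to attack; but as it stands, neither your argument nor the paper settles it.
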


If the preceding conjecture is true then the following conjecture is implied.

\begin{conj}
  \label{con:conj}
Let $ H=(V,E) $ be a $ 3 $-uniform hypergraph with $ n $ vertices. Then $ \pmd(H) \leq \frac{3}{2} n^2-\frac{15}{2}n+10 $.
\end{conj}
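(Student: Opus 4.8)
The plan is to reduce the claim for an arbitrary $3$-uniform hypergraph to the complete one treated in \ref{matching} and \ref{CONJ}. Write $G=([n],E_G)$ for the given hypergraph and let $K=([n],E_K)$ be the complete $3$-uniform hypergraph on the same vertex set, so that $E_G \subseteq E_K$. The first step is a monotonicity lemma: if $G$ is a spanning subhypergraph of $K$, then $\pmd(G) \le \pmd(K)$. To prove it, I would take a pmd $E_K=F_1 \cup \cdots \cup F_p$ of $K$ and set $F_i^G := F_i \cap E_G$. A direct set-theoretic computation gives $E_G \setminus \bigcup_{j<i} F_j^G = E_G \cap \big(E_K \setminus \bigcup_{j<i} F_j\big)$, so the residual hypergraph of $G$ at stage $i$ is a subhypergraph of the residual hypergraph of $K$ at stage $i$. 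The key point is that a positive matching \emph{restricts}: if $M$ is a positive matching of a hypergraph with weight witness $\w$, then for any subhypergraph the intersection of $M$ with the smaller edge set is again a positive matching, witnessed by the \emph{same} $\w$ (edges kept in $M$ still have positive weight, and the negative constraints only become fewer). Applying this to each $F_i$ shows that the nonempty $F_i^G$, taken in the same order, form a pmd of $G$ with at most $p$ parts, giving $\pmd(G)\le \pmd(K)$.

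The second step is to bound $\pmd(K)$ using \ref{CONJ}. By \ref{matching} the sets $E_{l_1,l_2}$ partition $E_K$ as $(l_1,l_2)$ ranges over $E_n$, and assuming \ref{CONJ} each $E_{l_1,l_2}$ is a positive matching of the \emph{entire} hypergraph $K$. This simultaneity is stronger than what a pmd requires: since a positive matching of $K$ is, by the restriction principle above, automatically a positive matching of every subhypergraph containing it, \emph{any} ordering of the parts $E_{l_1,l_2}$ satisfies the sequential condition in the definition of a pmd. Hence these parts constitute a pmd of $K$, and the number of parts is exactly $|E_n|$, which \ref{matching} computes to be $\frac{3}{2}n^2-\frac{15}{2}n+10$. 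Therefore $\pmd(K)\le \frac{3}{2}n^2-\frac{15}{2}n+10$.

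Combining the two steps yields $\pmd(G)\le \pmd(K)\le \frac{3}{2}n^2-\frac{15}{2}n+10$, as claimed. I do not expect the main difficulty to lie in this implication: granting \ref{CONJ}, the argument is essentially bookkeeping, and the only point demanding care is the verification that positive matchings restrict to subhypergraphs with the same weight witness, which in turn makes both the monotonicity lemma and the ``any ordering works'' observation go through. The genuine obstacle is \ref{CONJ} itself---producing, for every admissible pair $(l_1,l_2)$, a weight function on $[n]$ that makes $E_{l_1,l_2}$ positive while forcing every other edge of $K$ to have negative weight---which is precisely the statement we are assuming here rather than proving.
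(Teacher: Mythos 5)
Your proposal is correct and takes essentially the same route as the paper: the paper also asserts \ref{con:conj} only conditionally on \ref{CONJ}, deducing it from the partition of the complete $3$-uniform hypergraph into the sets $E_{l_1,l_2}$ counted in \ref{matching}, exactly in the spirit of the graph case in \cite[Lemma 5.4]{ac-vw}. Your write-up simply makes explicit the bookkeeping the paper leaves implicit---that positive matchings restrict to subhypergraphs with the same weight witness, hence $\pmd$ is monotone under taking subhypergraphs and any ordering of the parts works---and these verifications are correct.
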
\label{conj}

As mentioned in Section \ref{section0} by \cite{LANS} the upper bound for the 
symmetric tensor rank in case $ k=3 $ is a cubic function in $ n $. 
By \ref{con:conj} the $ \pmd(H) $ for a $3$-uniform hypergraph $H$ is bounded 
from above by a quadratic function in $ n $. 
Thus if the conjecture holds, for 
$3\binom{n}{2}-6n+10 \leq d \leq \binom{n+2}{3}-n+1 $ every coordinate section 
of $ S_{n,k}^d $ is irreducible. By \cite[Theorem 5.4(1)]{ac-vw} 
there is a linear upper bound of $ \pmd(H) $ for a $ 2 $-uniform hypergraph,
in \ref{conj} the $ \pmd(H) $ for a $ 3 $-uniform hypergraph $ H $ is bounded 
from above by a quadratic function in $ n $. Therefore, one can speculate that 
in general for $ k $-uniform hypergraphs $H$ the value of $\pmd(H)$ is bounded
from above by a polynomial of degree $ k-1 $ in number $n$ of vertices.

\end{document}